\documentclass[a4paper,twoside,reqno]{amsart}

\usepackage[pagewise]{lineno}
\usepackage[utf8]{inputenc}

\usepackage{authblk,orcidlink}
\usepackage{hyperref}
\hypersetup{urlcolor=blue, citecolor=red}
\usepackage{amsmath,amsthm,amssymb,xcolor,bbm,mathrsfs,graphicx,float, subcaption}
\usepackage[english]{babel}
\usepackage{fancyhdr}
\subjclass[2020]{60J25, 91A22, 60K35}
\keywords{Garbage-offloading process; Fermi function;
evolutionary games on graphs; payoff polarization;
garbage concentration; stochastic dynamics}

\title{Payoff Polarization and Garbage Concentration in a Stochastic Garbage Disposal Game}
\author{Hsin-Lun Li}

\date{}
\email{hsinlunl@math.nsysu.edu.tw}

\theoremstyle{definition}
\newtheorem{theorem}{Theorem}

\newtheorem{lemma}[theorem]{Lemma}
\newtheorem{corollary}[theorem]{Corollary}

\begin{document}\sloppy

\thispagestyle{firstpage}
\maketitle
\begin{center}
    Hsin-Lun Li \orcidlink{0000-0003-1497-1599}
    \centerline{$^1$Department of Applied Mathematics, National Sun Yat-sen University, Kaohsiung 804, Taiwan}
\end{center}
\medskip

\begin{abstract}
    We study a stochastic garbage disposal game on a finite graph in which agents transfer garbage through pairwise interactions governed by payoff-dependent Fermi probabilities. We investigate the long-term behavior of both payoffs and garbage distributions. We prove that payoff dispersion grows at least linearly in time almost surely, leading to payoff polarization, with the maximum and minimum payoffs diverging to positive and negative infinity, respectively. Moreover, the payoff range grows at least on the order of the square root of time. For garbage dynamics, we establish almost-sure concentration on a single agent in the two-agent case for any positive disposal efficiency. When disposal is complete and the underlying graph is connected, we further prove that all garbage becomes concentrated on a single agent after a finite random time, although the agent holding the garbage may change over time. These results characterize the emergence of payoff inequality and garbage concentration in stochastic competitive interactions on networks.
\end{abstract}

\section{Introduction}

The long-term behavior of competitive systems on networks is influenced by both the interaction structure and the mechanisms governing the outcomes of individual interactions. In stochastic models, local interactions can generate substantial differences among agents even when the total quantity of a conserved resource remains constant. Such phenomena arise in evolutionary games, models of social hierarchies, and stochastic exchange models of wealth and income. Evolutionary games on graphs provide a framework for studying how network structure affects competitive dynamics \cite{szabo2007evolutionary,nowak2006evolutionary,perc2010coevolutionary}. Meanwhile, kinetic exchange models demonstrate how repeated pairwise transactions can generate nontrivial distributions of resources and persistent inequality \cite{dragulescu2000statistical,chakraborti2000statistical,chatterjee2004pareto,chatterjee2007kinetic}. These studies motivate the investigation of stochastic interaction systems in which resource redistribution and competitive outcomes are dynamically coupled.

The garbage disposal game provides a framework for studying resource transfer through pairwise competition. In the multilayer setting, the interaction structure is represented by multiple graph layers, allowing the effects of different interaction networks to be incorporated into the dynamics \cite{li2025multilayer}. The present work considers a continuous-time stochastic garbage disposal game on a finite graph. Each edge is equipped with an independent Poisson clock, and an interaction occurs whenever the corresponding clock rings. The two agents connected by the activated edge compete, with the probability of winning determined by their current payoffs through a Fermi function. The winner disposes of a fraction of their garbage and transfers the disposed amount to the loser, while their payoffs change by equal and opposite increments. Consequently, the total amount of garbage and the sum of all payoffs are conserved, although the individual garbage holdings and payoffs evolve over time.

Payoff-dependent pairwise comparison rules are widely used in evolutionary game theory. In particular, the Fermi function provides a probabilistic mechanism through which payoff differences influence competitive outcomes \cite{traulsen2006stochastic}. This mechanism connects naturally to models of self-organizing hierarchies, in which reinforcement of winning and losing experiences can produce pronounced differences in competitive status \cite{bonabeau1995phase,bonabeau1996mathematical}. Related reinforcement mechanisms have also been studied in probability theory \cite{pemantle1988random}. However, in the garbage disposal game considered here, the payoff process and the garbage distribution are coupled: an agent's current payoff influences its probability of winning future interactions, while the outcome of each interaction changes both the payoff configuration and the distribution of garbage. Understanding the resulting long-term behavior therefore requires analyzing these two evolving quantities together.

Our first objective is to characterize the asymptotic behavior of the payoffs. Let \(P_i(t)\) denote the payoff of agent \(i\) at time \(t\), and define the payoff dispersion by
\[
V_t=\sum_{i=1}^{n}P_i(t)^2.
\]
Since the sum of the payoffs is conserved and initially equals zero, \(V_t\) measures the dispersion of payoffs around their common mean. We prove that, almost surely, the payoff dispersion grows at least linearly in time, with an explicit lower bound determined by the payoff increment and the number of edges. This result implies payoff polarization: the maximum payoff tends to positive infinity, whereas the minimum payoff tends to negative infinity. We further establish an explicit lower bound on the asymptotic growth rate of the payoff range. These conclusions show that the conservation of total payoff does not prevent increasingly pronounced payoff inequality.

Our second objective is to investigate the long-term distribution of garbage. We first establish that, for a connected graph with two agents, all garbage asymptotically concentrates on one of the two agents for every disposal efficiency \(\alpha\in(0,1]\). We then consider complete disposal, corresponding to \(\alpha=1\), on an arbitrary finite connected graph. In this case, we prove that there exists an almost surely finite random time after which all garbage is concentrated on a single agent at every subsequent time. The identity of this agent need not remain fixed: the theorem concerns concentration at each time, rather than permanent ownership by one particular agent. These results identify conditions under which stochastic competition leads to complete garbage concentration.

The mathematical analysis relies on the continuous-time jump-process structure of the model and on martingale methods. General foundations for Markov processes and interacting stochastic systems can be found in \cite{ethier2009markov,liggett1985interacting,liggett2013stochastic,harris1974contact}. The martingale convergence arguments are related to standard results in probability theory \cite{durrett2019probability,hall1980martingale}. Although the present model shares structural features with stochastic exchange and interacting-particle systems, its central feature is the feedback between payoff-dependent competition and garbage redistribution.

The main contributions of this paper can be summarized as follows:
\begin{itemize}
    \item We establish an almost-sure linear lower bound on payoff dispersion and derive the resulting payoff polarization and a quantitative lower bound on the growth of the payoff range.
    \item We prove almost-sure garbage concentration for the two-agent model for every positive disposal efficiency.
    \item We establish finite-time garbage concentration for complete disposal on every finite connected graph, without requiring the identity of the agent holding all the garbage to remain fixed.
\end{itemize}

The remainder of the paper is organized as follows. Section~2 introduces the stochastic model and its assumptions. Section~3 presents the main results. The auxiliary lemmas and their proofs are developed in the subsequent sections, followed by numerical simulations illustrating the theoretical findings.

\section{Model}

Let

$$
[n]=\{1,\ldots,n\},
\qquad n\geq 2,
$$
be a finite population of agents. Each agent $i\in[n]$ has a
nonnegative amount of garbage $x_i(t)$ and a payoff
$P_i(t)\in\mathbb{R}$ at time $t\in\mathbb{R}_+$. The garbage state is
represented by

$$
X(t)=\bigl(x_1(t),\ldots,x_n(t)\bigr)^\top
\in\mathbb{R}_+^n.
$$
For each $t\geq 0$, let
\[
x_{(1)}(t)\leq x_{(2)}(t)\leq\cdots\leq x_{(n)}(t)
\]
denote the ordered garbage loads at time $t$.
Let

$$
G=([n],E)
$$
be an undirected social network, where $E$ is the set of socially
connected pairs. For each edge $\{i,j\}\in E$, interactions occur
according to an independent Poisson process with rate $1$. Thus, all
socially connected pairs interact at the same rate. Equivalently, the
total interaction rate is $|E|$, and, conditional on an interaction
occurring, each edge is selected with probability $1/|E|$.

Assume that the initial payoffs satisfy

$$
\sum_{i=1}^nP_i(0)=0.
$$
Let

$$
M=\sum_{i=1}^n x_i(0)
$$
denote the total amount of garbage in the population.

Suppose that the edge $\{i,j\}\in E$ interacts at time $t$. The winner is
determined probabilistically according to the current payoffs. By the
Fermi rule, agent $i$ defeats agent $j$ with probability

$$
p_{ij}(t)
=
\frac{1}
{1+\exp\{-\beta(P_i(t)-P_j(t))\}},
\qquad \beta>0.
$$
Thus, an agent with a higher payoff has a larger probability of winning.
The parameter $\beta$ measures the sensitivity of the competition outcome
to payoff differences.

Let $\alpha\in(0,1]$ denote the disposal efficiency and let $a>0$
denote the payoff reward for winning and the payoff cost incurred by
losing. If $i$ defeats $j$, then $i$ offloads a proportion $\alpha$ of
its current garbage onto $j$ and receives a payoff increase of $a$,
while $j$ incurs a payoff decrease of $a$. Accordingly,

$$
x_i(t+)=(1-\alpha)x_i(t),
\qquad
P_i(t+)=P_i(t)+a,
$$
and

$$
x_j(t+)=x_j(t)+\alpha x_i(t),
\qquad
P_j(t+)=P_j(t)-a.
$$
All other agents retain their garbage and payoffs:

$$
x_k(t+)=x_k(t),
\qquad
P_k(t+)=P_k(t),
\qquad
k\notin\{i,j\}.
$$
If $j$ defeats $i$, the corresponding updates are obtained by
interchanging $i$ and $j$.

The interaction rule preserves both total garbage and total payoff.
Consequently, for every $t\geq0$,

$$
\sum_{i=1}^n x_i(t)
=
\sum_{i=1}^n x_i(0)
=
M
$$
and

$$
\sum_{i=1}^n P_i(t)
=
\sum_{i=1}^n P_i(0)
=
0.
$$
In particular, the garbage state remains in the simplex

$$
\mathcal{S}_M
=
\left\{
x\in\mathbb{R}_+^n:
\sum_{i=1}^n x_i=M
\right\}.
$$

The initial payoffs may depend on the initial garbage distribution. For
example,

$$
P_i(0)=\frac{M}{n}-x_i(0),
\qquad i\in[n],
$$
implies that agents with less garbage have a higher probability of
winning at the initial time. In contrast,

$$
P_i(0)=x_i(0)-\frac{M}{n},
\qquad i\in[n],
$$
implies that agents with more garbage have a higher probability of
winning at the initial time. Both specifications satisfy

$$
\sum_{i=1}^nP_i(0)=0.
$$
The model therefore incorporates the feedback mechanism

$$
\begin{aligned}
\text{garbage level}
&\longrightarrow
\text{payoff}
\longrightarrow
\text{winning probability}\\
&\longrightarrow
\text{garbage disposal}
\longrightarrow
\text{future garbage level}.
\end{aligned}
$$
In particular, winning reduces an agent's own garbage, whereas losing
increases its garbage burden through the garbage received from the
winner.

To quantify the dispersion of payoffs, define

$$
V_t=\sum_{i=1}^nP_i(t)^2.
$$
Since the total payoff is conserved and equals zero,

$$
\frac{1}{n}\sum_{i=1}^nP_i(t)=0
$$
for every $t\geq0$. Hence, $V_t$ measures the squared Euclidean
dispersion of the payoff vector around its population mean. We refer to
$V_t$ as the \emph{payoff dispersion}.

\section{Main Results}\label{sec:main}

This section presents the main asymptotic results of the model. We first establish the almost-sure linear growth of payoff dispersion, which implies unbounded payoff polarization and provides a lower bound on the growth rate of the payoff range. We then investigate the long-term distribution of garbage. For two agents, we show that garbage eventually concentrates on one agent for any disposal efficiency. For a general connected graph, we establish finite-time garbage concentration under complete disposal.

\begin{theorem}[Almost-sure linear growth of payoff dispersion]\label{Thm:Almost-sure linear growth of payoff dispersion}
Under the model assumptions,

$$
\liminf_{t\to\infty}\frac{V_t}{t}
\geq
2a^2|E|
\qquad\text{a.s.}
$$
In particular,

$$
V_t\longrightarrow\infty
\quad\text{as}\quad t\to\infty\quad\text{a.s.}
$$

\end{theorem}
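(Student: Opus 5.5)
Write $D_{ij}(t)=P_i(t)-P_j(t)$. I would compute the generator of the payoff process acting on $V$ and then use a martingale strong law. When edge $\{i,j\}$ rings and $i$ wins, $V$ changes by
\[
(P_i+a)^2-P_i^2+(P_j-a)^2-P_j^2 = 2aD_{ij}+2a^2 .
\]
If $j$ wins, it changes by $-2aD_{ij}+2a^2$. The drift contributed by edge $\{i,j\}$ is therefore
\[
2a^2+2aD_{ij}\bigl(2p_{ij}-1\bigr)=2a^2+2aD_{ij}\tanh\bigl(\beta D_{ij}/2\bigr)\ \ge\ 2a^2 ,
\]
since $x\tanh(\beta x/2)\ge0$. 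Summing over the edges gives
\[
V_t=V_0+\int_0^t \Phi(s)\,ds+N_t,
\qquad
\Phi(s)=\sum_{\{i,j\}\in E}\Bigl(2a^2+2aD_{ij}(s)\tanh\bigl(\beta D_{ij}(s)/2\bigr)\Bigr)\ge 2a^2|E| ,
\]
where $N_t$ is the compensated jump martingale. Because $V$ only jumps and each edge rings at rate $1$, the Dynkin formula applies for this pure-jump process with finite total rate, using localization if needed. It then remains to show $N_t/t\to0$ almost surely. Then $\liminf V_t/t\ge \liminf t^{-1}\int_0^t\Phi\ge 2a^2|E|$, and $V_t\to\infty$ follows.

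The main obstacle is controlling $N_t$, because its jumps $\pm 2aD_{ij}$ grow with the payoffs. Payoffs move by $a$ per jump, so $|P_i(t)|\le |P_i(0)|+a\,\mathcal N_t$, where $\mathcal N_t$ is the total Poisson count of rate $|E|$. Hence the predictable quadratic variation satisfies
\[
\langle N\rangle_t=\int_0^t\sum_{\{i,j\}\in E}4a^2\Bigl(D_{ij}^2+a^2-\bigl(D_{ij}(2p_{ij}-1)+a\bigr)^2\Bigr)ds \le C\int_0^t \bigl(1+V_s\bigr)\,ds ,
\]
using $D_{ij}^2\le 2(P_i^2+P_j^2)$. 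A crude bound $V_s=O(s^2)$ would give $\langle N\rangle_t=O(t^3)$, which is too weak. I will instead use a self-normalized strong law: for a locally square-integrable martingale, $N_t/\langle N\rangle_t\to0$ almost surely on $\{\langle N\rangle_\infty=\infty\}$, and in fact $N_t=o\bigl(\langle N\rangle_t^{1/2+\epsilon}\bigr)$. Let $A_t=\int_0^t\Phi\,ds$. Then $\langle N\rangle_t\le C\int_0^t(1+V_s)\,ds$.

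A cleaner route would compare $\int_0^t V_s\,ds$ with $t\,A_t$, but $V$ and $A$ are not directly comparable, so I will argue on $V$ itself. Suppose, for contradiction, that on some event $\liminf V_t/t<2a^2|E|-\delta$. The martingale bound gives $|N_t|\le C'\bigl(t\sup_{s\le t}V_s+t\bigr)^{1/2+\epsilon}$. Meanwhile $V_t\ge V_0+2a^2|E|t+N_t$, which at least gives $V_t\ge 2a^2|E|t-|N_t|$.

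To close the loop I would first prove an a priori bound $\sup_{s\le t}V_s=O(t^{1+\epsilon})$ almost surely. This follows from $V_t\le V_0+A_t+N_t$ together with $\Phi\le 2a^2|E|+C\sqrt{V}$, via a Gronwall-type bootstrap: a polynomial bound $V=O(t^2)$ is improved to $O(t^{1+\epsilon})$ after finitely many iterations. Then $\langle N\rangle_t=O(t^{2+\epsilon})$ and $N_t=O(t^{1+\epsilon'})$. This is still borderline.

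The bootstrap is the delicate step. If it fails, I would fall back on a discrete-time argument. At the $k$-th jump, let $\xi_k=\Delta V-\mathbb E[\Delta V\mid\mathcal F]$. Then $\mathbb E[\Delta V\mid\mathcal F]\ge 2a^2$. The normalized increments $\xi_k/(1+V_{k-1})^{1/2}$ are bounded, and Kronecker's lemma applied to $\sum \xi_k/k$ requires $\sum \mathbb E[\xi_k^2]/k^2<\infty$ conditionally, that is, $\sum V_{k-1}/k^2<\infty$. This fails if $V_k\sim k^2$, so this route too needs the bootstrap.

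An alternative I would consider first is the process $\sqrt{V_t}$, or the $\ell^2$ norm, whose jumps are bounded by $\sqrt2\,a$. Its drift is nonnegative up to $O(1/\sqrt V)$ corrections, so a bounded-jump martingale argument shows $\sqrt{V_t}=O(\sqrt{t\log t})$ almost surely. This gives $\langle N\rangle_t=O(t^2\log t)$, and then $N_t=o(t)$ needs $\sqrt{\langle N\rangle_t}$ to be $o(t)$, which still falls short by a $\sqrt{\log t}$ factor. That gap is where I expect the real work to be, likely requiring the exact cancellation that $\Phi$ grows like $|D|$ precisely when jumps are large.
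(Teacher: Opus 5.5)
Your drift computation and Dynkin decomposition match the paper's. The proof has a genuine gap, and it is exactly the step you flagged. You reduce the theorem to $N_t/t\to0$ for the $ds$-compensated martingale $N_t=V_t-V_0-\int_0^t\Phi(s)\,ds$. That statement is false, so no bootstrap can deliver it.

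Take $n=2$. The leader eventually never loses, so for large $t$ we have $|D_{12}(t)|=c+2a\mathcal N_t$ with $c$ a random constant. Also $V_t=D_{12}(t)^2/2$, and $\Phi=2a^2+2a|D_{12}|$ up to a time-integrable error. Writing $\mathcal N_s=s+W_s$, a direct computation gives $N_t=4a^2\int_0^t s\,dW_s+O(t+W_t^2)$. The stochastic integral has predictable quadratic variation $t^3/3$, so the martingale LIL yields $\limsup_t|N_t|/t=\infty$.

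The same example refutes your a priori bounds $\sup_{s\le t}V_s=O(t^{1+\epsilon})$ and $\sqrt{V_t}=O(\sqrt{t\log t})$. Here $V_t\sim2a^2t^2$, and the drift of $\sqrt V$ tends to $\sqrt2\,a$; a nonnegative drift gives no upper bound. Your order of magnitude $\langle N\rangle_t\le C\int_0^t(1+V_s)\,ds$ is right, but the integrand you wrote is not. It simplifies to $4a^2D_{ij}^2\operatorname{sech}^2(\beta D_{ij}/2)-8a^3D_{ij}\tanh(\beta D_{ij}/2)$, which becomes negative. The correct integrand is the conditional second moment $4a^2\bigl(D_{ij}^2+a^2+2aD_{ij}\tanh(\beta D_{ij}/2)\bigr)$.

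The missing idea is to separate the clock noise from the coin-flip noise. Given that edge $\{i,j\}$ rings, $\Delta V=2a^2+2a\epsilon D_{ij}$ with $\epsilon=\pm1$. Its conditional variance is $16a^2p_{ij}p_{ji}D_{ij}^2=4a^2D_{ij}^2\operatorname{sech}^2(\beta D_{ij}/2)\le C(a,\beta)$, because large jumps occur only when the outcome is almost certain. This is the factor $p_{ij}p_{ji}$ you discarded when bounding $D_{ij}^2$ by $2(P_i^2+P_j^2)$. Compensating only the coin flips, i.e.\ against the rate-one edge clocks $\mathcal N^e$ rather than against $ds$, gives
\[
V_t=V_0+\sum_{e\in E}\int_0^t\Bigl(2a^2+2aD_e(s-)\tanh\bigl(\beta D_e(s-)/2\bigr)\Bigr)\,d\mathcal N^e_s+\widetilde M_t,
\]
where $\widetilde M$ is a martingale with
\[
\langle\widetilde M\rangle_t=\int_0^t\sum_{e\in E}4a^2D_e(s)^2\operatorname{sech}^2\bigl(\beta D_e(s)/2\bigr)\,ds\le C|E|t .
\]
Hence $\widetilde M_t/t\to0$ and $V_t\ge V_0+2a^2\mathcal N_t+\widetilde M_t$, and $\mathcal N_t/t\to|E|$ gives the theorem.

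Your discrete-time fallback is the same argument, with two changes. The conditioning at the $k$-th jump must include which edge rings; otherwise edge-selection noise brings back an unbounded variance. And you should use the bound $C(a,\beta)$ rather than $V_{k-1}$.

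The paper uses this same $\operatorname{sech}^2$ bound, but attaches it to the $ds$-compensated martingale by writing $\langle M\rangle_t$ as an integral of conditional variances. For that martingale the integrand of the predictable quadratic variation is the conditional second moment. In the two-agent example this makes $\langle M\rangle_t$ of order $t^3$, and $M_t/t\to0$ fails there. So your distrust of $\langle N\rangle$ was well founded; what is needed is a change of compensator, not a bootstrap.
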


\begin{corollary}[Almost-sure payoff polarization]\label{Corollary:Almost-sure payoff polarization}
Under the model assumptions,

$$
\max_{i\in[n]}P_i(t)\to+\infty
\qquad\text{and}\qquad
\min_{i\in[n]}P_i(t)\to-\infty
\qquad\text{a.s.}
$$

\end{corollary}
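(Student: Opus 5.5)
The plan is to derive the corollary directly from Theorem~\ref{Thm:Almost-sure linear growth of payoff dispersion} together with the conservation law $\sum_{i=1}^n P_i(t)=0$. No new probabilistic input is needed. We work on the almost-sure event $\Omega_0$ on which $V_t\to\infty$, and argue pathwise on $\Omega_0$.

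First, fix $\omega\in\Omega_0$ and $t\geq0$, and write $m_t=\max_i P_i(t)$ and $\ell_t=\min_i P_i(t)$. Since the payoffs sum to zero, we have $m_t\geq 0\geq \ell_t$. Every coordinate satisfies $|P_i(t)|\leq \max(m_t,-\ell_t)$, so
\[
V_t\leq n\max\bigl(m_t^2,\ell_t^2\bigr).
\]
Hence $\max(m_t,-\ell_t)\geq \sqrt{V_t/n}\to\infty$.

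Second, we upgrade this to divergence of both extremes, again using the zero-sum constraint. The positive parts and the negative parts of the payoff vector carry equal mass:
\[
\sum_i P_i(t)^+=\sum_i P_i(t)^-.
\]
The left side is at most $n\,m_t$, and the right side is at least $-\ell_t$. Therefore $-\ell_t\leq n\,m_t$, and symmetrically $m_t\leq n(-\ell_t)$. Combining with the first step gives
\[
m_t\geq \tfrac1n\max(m_t,-\ell_t)\geq n^{-3/2}\sqrt{V_t},
\]
and likewise $-\ell_t\geq n^{-3/2}\sqrt{V_t}$. Letting $t\to\infty$ on $\Omega_0$ yields $m_t\to+\infty$ and $\ell_t\to-\infty$. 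Since $\Omega_0$ has probability one, the corollary follows almost surely.

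There is no real obstacle here. The only point requiring care is the second step: growth of $V_t$ alone only forces one of the two extremes to blow up. The zero-sum identity is exactly what ties the maximum and the minimum together, so that both must diverge.
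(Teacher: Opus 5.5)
Your proposal is correct and takes the same route as the paper: it combines $V_t\to\infty$ from Theorem~\ref{Thm:Almost-sure linear growth of payoff dispersion} with the constraint $\sum_i P_i(t)=0$ to force both extremes to diverge. Your explicit inequalities $-\ell_t\le n\,m_t$, $m_t\le n(-\ell_t)$, and hence $\min(m_t,-\ell_t)\ge n^{-3/2}\sqrt{V_t}$, make precise the paper's informal remark that the positive and negative parts of the payoff vector must balance.
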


\begin{corollary}[Almost-sure growth of the payoff range]\label{Corollary:Almost-sure growth of the payoff range}
Let

$$
R_t=\max_{i\in[n]}P_i(t)-\min_{i\in[n]}P_i(t).
$$
Then

$$
\liminf_{t\to\infty}\frac{R_t}{\sqrt{t}}
\geq
2a\sqrt{\frac{|E|}{n-1}}
\qquad\text{a.s.}
$$
In particular,

$$
R_t\to\infty
\quad\text{as}\quad t\to\infty\quad\text{a.s.}
$$

\end{corollary}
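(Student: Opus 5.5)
The plan is to reduce the statement to Theorem~\ref{Thm:Almost-sure linear growth of payoff dispersion} through a deterministic inequality linking $V_t$ and $R_t$ under the constraint $\sum_i P_i(t)=0$. Write $m=\min_i P_i(t)$, $M'=\max_i P_i(t)$ and $R=M'-m$. Each $P_i$ lies in $[m,M']$, and the payoffs sum to zero. I will show that these two facts force
\[
\sum_{i=1}^n P_i^2 \le \frac{n-1}{4}\,R^2 .
\]
Then $R_t\ge 2\sqrt{V_t/(n-1)}$ holds pathwise, for every $t$ and every outcome.

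The key step is this maximization. On the box $[m,M']^n$ intersected with the hyperplane $\sum P_i=0$, the function $\sum P_i^2$ is convex, so it attains its maximum at an extreme point of the polytope. Such a point has at most one coordinate strictly inside $(m,M')$; every other coordinate equals $m$ or $M'$.

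A quick way to finish is to shift the variables. Put $Q_i=P_i-c$ with $c=(m+M')/2$, so that $|Q_i|\le R/2$. Since $\sum P_i=0$,
\[
\sum_i P_i^2=\sum_i Q_i^2-nc^2,
\]
which gives the crude bound $\sum_i P_i^2\le nR^2/4$. To improve $n$ to $n-1$, use the extreme-point structure. Suppose $k$ coordinates equal $M'$, $n-1-k$ equal $m$, and one equals a value $y$ determined by the zero-sum condition. Optimizing over $k$ and $y$ gives a maximum of at most $\frac{n-1}{4}R^2$.

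The degenerate cases need a check. When $n=2$, the payoffs are $P_1=R/2$ and $P_2=-R/2$, so $V=R^2/2$. The claimed bound is $(n-1)R^2/4=R^2/4$, which is smaller. So the inequality $V\le \frac{n-1}{4}R^2$ fails for $n=2$, and the constant needs rechecking.

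The honest bound is a different one. The sum of squares of numbers with zero mean in an interval of length $R$ satisfies
\[
V\le \frac{n R^2}{4}
\]
(Popoviciu's inequality for the variance). This gives $R_t\ge 2\sqrt{V_t/n}$ and hence $\liminf_{t\to\infty} R_t/\sqrt t\ge 2a\sqrt{2|E|/n}$.

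A direct alternative is also available. Since $|P_i|\le R$ for every $i$ (zero sum forces $m\le 0\le M'$), we have $P_i^2\le R^2$. Moreover $\sum_i P_i^2\le R\sum_i|P_i|$, and $\sum_i|P_i|=2\sum_i P_i^+\le 2(n-1)M'$, with a symmetric bound in terms of $|m|$. Combining these gives a bound of the form $V\le c_n R^2$.

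The stated constant requires $V\le \frac{n-1}{2}R^2$. This follows from
\[
V\le R\cdot\tfrac12\sum_i|P_i|\le R\cdot\tfrac{n-1}{2}\cdot R ,
\]
where I will use $\sum_i|P_i|\le (n-1)R$. To prove that last inequality, note that at most $n-1$ of the $P_i$ are nonzero at the extreme configuration, and each satisfies $|P_i|\le R$. I would verify this refinement carefully, since $\tfrac{n-1}{2}R^2$ is exactly what produces the stated constant: it gives $R^2\ge 2V/(n-1)$, and so $\liminf R_t/\sqrt t\ge\sqrt{2\cdot 2a^2|E|/(n-1)}=2a\sqrt{|E|/(n-1)}$.

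Once the inequality $V_t\le \frac{n-1}{2}R_t^2$ is established, the rest is routine. Take square roots, divide by $\sqrt t$, and apply Theorem~\ref{Thm:Almost-sure linear growth of payoff dispersion}, using the almost-sure event on which $\liminf V_t/t\ge 2a^2|E|$. The statement $R_t\to\infty$ follows immediately. I expect the main obstacle to be the sharp constant in the deterministic inequality relating $V$ and $R$ under zero sum, and I would settle it by the extreme-point (linear programming) argument above.
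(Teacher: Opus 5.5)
Your reduction matches the paper's: a pathwise bound $V_t\le\frac{n-1}{2}R_t^2$ under $\sum_iP_i(t)=0$, followed by Theorem~\ref{Thm:Almost-sure linear growth of payoff dispersion}. The gap is in that deterministic bound, which is the only real content here, because the argument you finally commit to does not establish it.

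Your first bound $V\le\frac{n-1}{4}R^2$, and the claim that optimizing over $k$ and $y$ yields it, fails for every $n$, not only $n=2$. For example, $P=(2,-1,-1)$ has $R=3$ and $V=6>\frac92$.

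In the final chain $V\le\frac R2\sum_i|P_i|\le\frac{n-1}{2}R^2$ there are two problems:
\begin{itemize}
\item The factor $\frac12$ is asserted without proof. What you actually argued is $V\le R\sum_i|P_i|$. That gives only $V\le(n-1)R^2$ and the constant $a\sqrt{2|E|/(n-1)}$, which is too small by a factor $\sqrt2$.
\item Your reason for $\sum_i|P_i|\le(n-1)R$ is false. The point $(2,-1,-1)$ is a vertex of $[-1,2]^3\cap\{\sum_iP_i=0\}$ and has no zero coordinate.
\end{itemize}

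Both inequalities do hold, but for other reasons:
\begin{itemize}
\item Each positive entry is at most $M'$, each negative entry has modulus at most $|m|$, and $\sum_iP_i^+=\sum_iP_i^-=\frac12\sum_i|P_i|$. Hence $V\le M'\sum_iP_i^++|m|\sum_iP_i^-=\frac R2\sum_i|P_i|$.
\item At most $n-1$ entries are positive and at most $n-1$ are negative. Hence $\sum_i|P_i|\le(n-1)(M'+|m|)=(n-1)R$.
\end{itemize}

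None of this repair was needed. Your shift argument already proves $V\le\frac n4R^2$ rigorously. Since $\frac n4\le\frac{n-1}{2}$ for every $n\ge2$, it implies the required inequality. It even gives the stronger conclusion $\liminf_{t\to\infty}R_t/\sqrt t\ge2a\sqrt{2|E|/n}$, which dominates $2a\sqrt{|E|/(n-1)}$ because $\frac2n\ge\frac1{n-1}$.

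The missing step was simply to compare $\frac n4$ with $\frac{n-1}{2}$. Trying to improve $n$ to $n-1$ went in the wrong direction.

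The paper instead uses the identity $V_t=\frac1n\sum_{i<j}\bigl(P_i(t)-P_j(t)\bigr)^2$, valid because the payoffs sum to zero. It bounds each of the $\binom n2$ terms by $R_t^2$, which gives exactly $\frac{n-1}{2}R_t^2$. This is shorter, but for $n\ge3$ it is weaker than your variance bound.
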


\begin{corollary}[Two-agent garbage concentration]
\label{Corollary:Two-agent-garbage-concentration}
Suppose that $n=2$ and $G$ is connected. Then, for every
$\alpha\in(0,1]$, there exists a random variable $I\in[2]$ such that
\[
x_I(t)\longrightarrow 0,
\qquad
x_{3-I}(t)\longrightarrow M
\qquad\text{as}\quad t\to\infty\quad\text{a.s.}
\]
\end{corollary}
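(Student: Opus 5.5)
With $n=2$ and $G$ connected there is a single edge $\{1,2\}$ of rate $1$. Write $D_t=P_1(t)-P_2(t)$. Since $P_1+P_2\equiv 0$, we have $V_t=D_t^2/2$, and the corollary on polarization gives $|D_t|\to\infty$ a.s. The plan is to show that the sign of $D_t$ eventually stabilizes, so that one agent, say $W$, wins all but finitely many interactions. Garbage then concentrates on the other agent $I=3-W$, who keeps receiving $W$'s garbage.

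\textbf{Step 1: the sign of $D_t$ is eventually constant.} $D$ jumps by $\pm 2a$ at each ring. It moves up with probability $p(D)=(1+e^{-\beta D})^{-1}$ and down with probability $1-p(D)$, so the embedded chain $(D_k)$ is a birth--death chain on the lattice $D_0+2a\mathbb{Z}$. Since $|D_k|\to\infty$ a.s., the chain visits any bounded interval only finitely often. Because the steps have size $2a$ and the interval $[-2a,2a]$ cannot be jumped over, the chain can change sign only finitely many times. Hence either $D_k\to+\infty$ or $D_k\to-\infty$. Let $W=1$ in the first case, $W=2$ in the second, and set $I=3-W$.

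\textbf{Step 2: the loser wins only finitely often.} On $\{D_k\to+\infty\}$ we need $D_k\to+\infty$ at linear speed. The increments have conditional mean $2a(2p(D_k)-1)$, which is at least $c>0$ once $D_k$ is large. Azuma's inequality or the martingale SLLN then gives $\liminf D_k/k>0$. (Alternatively, Theorem~\ref{Thm:Almost-sure linear growth of payoff dispersion} gives $|D_t|\ge c\sqrt t$, which is enough.) The conditional probability that agent $2$ wins at step $k$ is $1-p(D_k)\le e^{-\beta D_k}$, and this is summable. The conditional Borel--Cantelli lemma (Lévy's extension) then shows that agent $2$ wins only finitely many times. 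By symmetry the same holds on $\{D_k\to-\infty\}$.

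\textbf{Step 3: garbage concentrates.} After the last win of $I$, every interaction is won by $W$. At each such step, $x_W\mapsto(1-\alpha)x_W$ while $x_I$ increases by the same amount, so after $m$ further rings $x_W=(1-\alpha)^m x_W(\tau)\to0$, using $\alpha>0$. The Poisson clock rings infinitely often a.s., so $x_W(t)\to0$ and $x_I(t)=M-x_W(t)\to M$. Thus $x_{W}(t)\to0$ and $x_{3-W}(t)\to M$, which is the claim with the random index $W$ in the role of $I$ in the statement (the agent whose garbage vanishes).

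\textbf{Main obstacle.} I expect Step 2 to be the hardest part: upgrading divergence of $|D|$ to summability of the losing probabilities. I would first try the drift argument: once $D_k>K$, with $K$ large, the chain dominates a random walk with positive drift, which grows linearly. If bookkeeping with stopping times becomes messy, the fallback is the $\sqrt t$ rate from the payoff-range corollary (here $R_t=|D_t|$), which already makes $\sum e^{-\beta c\sqrt k}$ finite.
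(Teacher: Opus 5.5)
Your proof is correct and follows the paper's argument step for step: the sign of $D$ stabilizes because $|D|\to\infty$ and the steps are $\pm 2a$; the losing probabilities are summable, so the conditional Borel--Cantelli lemma applies; and the eventual winner's garbage decays like $(1-\alpha)^m$. The only variation is in Step~2, where the paper uses exactly your fallback, namely the $\sqrt{t}$ bound from the payoff-range corollary transferred to the embedded chain via $T_k/k\to1$ (so $T_k\ge k/2$ eventually, a step you left implicit), whereas your primary drift/martingale-SLLN argument on the embedded chain is also valid and gives the sharper rate $D_k/k\to 2a$ on $\{D_k\to+\infty\}$.
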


\begin{theorem}[Garbage concentration under complete disposal]
\label{Theorem:Garbage-concentration-alpha-one}
Suppose that $\alpha=1$ and $G$ is connected. Then there exists a finite
random time $T<\infty$ such that
\[
x_{(n)}(t)=M
\qquad\text{for all }t\ge T
\quad\text{a.s.}
\]
Equivalently, all garbage is concentrated on a single agent at each time after some finite random time.
\end{theorem}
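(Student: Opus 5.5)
The plan rests on a monotone quantity, the number of occupied agents
\[
N_t=\#\{i\in[n]: x_i(t)>0\}.
\]
When $\alpha=1$, suppose edge $\{i,j\}$ rings and $i$ wins. Then $x_i(t+)=0$ and $x_j(t+)=x_j(t)+x_i(t)$. If $x_i(t)=0$, nothing changes. If $x_i(t)>0$ and $x_j(t)=0$, the whole pile moves to $j$ and $N_t$ is unchanged. If both are positive, the two piles merge and $N_t$ drops by one. Hence $N_t$ is a nonincreasing integer-valued process with $N_t\ge1$ (because $M>0$), so it converges a.s. in finitely many jumps to a limit $N_\infty$. Since $x_{(n)}(t)=M$ exactly when $N_t=1$, and $N$ is monotone, it suffices to show $N_\infty=1$ a.s. The time $T$ is then the (finite) time of the last decrease of $N$.

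A key deterministic observation holds regardless of payoffs. If two occupied agents are adjacent, the next ring of that edge merges them with probability one, because whichever of them wins dumps its pile on the other. So on the event $\{N_\infty\ge2\}$, after the last jump of $N$ no edge between two occupied agents ever rings. Because the clocks are independent rate-one Poisson processes, every edge rings infinitely often a.s. It follows that, a.s. on $\{N_\infty\ge 2\}$, from some time on no two occupied agents are adjacent, i.e.\ piles live on an independent set and only move into empty neighbours. I would then localize. Fix a pair of piles at graph distance $k\ge2$ along a shortest path $w_0,\dots,w_k$ of the connected graph $G$. I would exhibit a finite sequence of rings and winners that moves the pile at $w_0$ along the path to $w_{k-1}$, followed by a ring of $\{w_{k-1},w_k\}$, forcing a merge. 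Conditional on $\mathcal F_s$, this sequence has probability at least some $q(\mathcal F_s)>0$ within $[s,s+1]$. Lévy's conditional Borel--Cantelli lemma then gives a merge a.s. on the event $\sum_m q(\mathcal F_m)=\infty$.

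The main obstacle is that $q$ is not bounded below uniformly. The winning probabilities are Fermi functions of payoff differences, and by Theorem~\ref{Thm:Almost-sure linear growth of payoff dispersion} and Corollary~\ref{Corollary:Almost-sure payoff polarization} these differences become unbounded. A pile sitting on an agent with very low payoff may therefore almost never win, and so almost never move. To handle this, I would first try to use the flexibility in the choice of which pile moves and along which path. On each edge $\{i,j\}$ we have $p_{ij}+p_{ji}=1$, so the agent with the larger payoff wins with probability at least $1/2$. I would then move piles along path edges in the direction of increasing payoff wherever possible, or alternatively let the pile whose holder currently has the higher payoff travel toward the other. I would then need to argue that along some path between two occupied agents there is a uniformly likely transport sequence, or else prove directly that a pile can stay frozen forever only with probability zero. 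The latter requires its holder to lose every one of infinitely many interactions with empty neighbours. I would control this by comparing the holder's payoff decrements with those of its empty neighbours over the rings of the incident edges. I expect this payoff-uniform lower bound to be the delicate step of the proof. Once it is established, the Borel--Cantelli argument combined with monotonicity of $N_t$ yields $N_\infty=1$ and the finite random time $T$.
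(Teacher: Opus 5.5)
Your reduction is the same as the paper's: the occupied-site count $N_t$ (the paper's $S(t)$) is nonincreasing, and everything depends on excluding $N_\infty\ge2$ with a transport sequence along a path. The step you flag as delicate is a genuine gap. It cannot be closed, because the statement is false on general connected graphs.

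Take $G$ to be the path $1$--$2$--$3$. Consider a state with garbage $(x_1,0,x_3)$, $x_1,x_3>0$, and payoffs $(0,0,0)$. From an all-positive start with zero payoffs, this state is reached with positive probability: let the first two rings be on $\{1,2\}$, won first by $1$ and then by $2$. Let $F$ be the event that agent $2$ wins every later interaction. Since $x_2=0$, these wins offload nothing, so on $F$ the garbage stays at $(x_1,0,x_3)$ forever and $x_{(3)}(t)<M$ for all $t$. On $F$, just before the $(m+1)$-st interaction, $P_2=ma$ while the opponent's payoff is $\le0$. Hence
\[
\mathbb P(F)\ \ge\ \prod_{m\ge0}\frac{1}{1+e^{-\beta ma}}\ >\ 0,
\qquad\text{since }\sum_{m\ge0}e^{-\beta ma}<\infty .
\]
So $N_\infty=2$ with positive probability. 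Your plan to compare the holder's payoff decrements with those of its empty neighbours runs the wrong way. Every loss by a pile holder widens the payoff gap, so the losing streak is self-reinforcing and the loss probabilities are summable. This is exactly the estimate the paper uses in the proof of Corollary~\ref{Corollary:Two-agent-garbage-concentration} to show that the leader eventually stops losing.

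The paper's own proof does not resolve this point either. It shows that, conditional on the state at the last jump time $T$ of $S$, a merge after $T$ has positive probability, and declares a contradiction. But $T$ is not a stopping time. Moreover, a conditional merge probability that is positive but not bounded below does not force a merge almost surely; this is precisely your observation about $q(\mathcal F_s)$.

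The conclusion does hold when any two occupied agents are automatically adjacent, e.g.\ for $G$ complete (or $n=2$). Then, whenever $N_t\ge2$, some edge joins two occupied agents, and every ring of it merges them regardless of the winner. Merges therefore occur at rate at least $1$, and $N_t$ reaches $1$ after an a.s.\ finite time. For a general connected graph, your transport argument yields only concentration with positive probability. The theorem has to be restricted or weakened rather than proved as stated.
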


\section{Proofs of the Main Results}

This section develops the technical tools needed to establish the main results. We first analyze the evolution of payoff dispersion using the infinitesimal generator and martingale methods. We then investigate the garbage redistribution process on connected graphs, with particular attention to the case of complete disposal.

\subsection{Payoff Dispersion}

We begin by examining the conditional expected change in payoff dispersion following an interaction. This leads to a lower bound on its infinitesimal generator and a corresponding martingale decomposition. We then establish a quadratic variation bound and apply a martingale strong law to obtain the almost-sure growth of payoff dispersion.

\begin{lemma}[Conditional expected increase of payoff dispersion]
Suppose that the edge $\{i,j\}\in E$ interacts at time $t$. Then

$$
\begin{aligned}
&\mathbb{E}\!\left[
\Delta V_t
\,\middle|\,
P_i(t),P_j(t),\{i,j\}\text{ interacts at time }t
\right]\\
&\qquad =
2a^2
+
2a\bigl(P_i(t)-P_j(t)\bigr)
\tanh\left(
\frac{\beta\bigl(P_i(t)-P_j(t)\bigr)}{2}
\right)
\geq 2a^2,
\end{aligned}
$$
where

$$
\Delta V_t=V_{t+}-V_t.
$$

\end{lemma}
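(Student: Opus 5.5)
This is a direct one-step computation: write out the two possible outcomes of the interaction on $\{i,j\}$, compute $\Delta V_t$ in each, and average using the Fermi probabilities. Set $D=P_i(t)-P_j(t)$ and $p=p_{ij}(t)=\bigl(1+e^{-\beta D}\bigr)^{-1}$. Only coordinates $i$ and $j$ change, so
\[
\Delta V_t=\bigl(P_i(t+)^2-P_i(t)^2\bigr)+\bigl(P_j(t+)^2-P_j(t)^2\bigr).
\]

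\emph{Case $i$ wins} (probability $p$). Then $P_i(t+)=P_i(t)+a$ and $P_j(t+)=P_j(t)-a$, so
\[
\Delta V_t=2aP_i(t)+a^2-2aP_j(t)+a^2=2a^2+2aD.
\]
\emph{Case $j$ wins} (probability $1-p$). By symmetry, $\Delta V_t=2a^2-2aD$.

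Averaging the two cases gives
\[
\mathbb{E}\bigl[\Delta V_t\mid\cdot\bigr]=2a^2+2aD(2p-1).
\]
The remaining step is the identity $2p-1=\tanh(\beta D/2)$. Multiplying the numerator and denominator of $p$ by $e^{\beta D/2}$ gives
\[
2p-1=\frac{e^{\beta D/2}-e^{-\beta D/2}}{e^{\beta D/2}+e^{-\beta D/2}}=\tanh\!\left(\frac{\beta D}{2}\right).
\]
This yields the stated formula.

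For the inequality, note that $\beta>0$ and $\tanh$ is odd and increasing, so $\tanh(\beta D/2)$ has the same sign as $D$. Hence $D\tanh(\beta D/2)\ge0$, with equality exactly when $D=0$. Since $a>0$, the bound $\ge 2a^2$ follows.

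There is no real obstacle; the only point needing care is the tanh identity and its sign. It is also worth noting that the conditional expectation depends only on $P_i(t)$ and $P_j(t)$, so conditioning on just these two payoffs (rather than the whole state) is legitimate.
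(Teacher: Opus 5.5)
Your proposal is correct and follows the paper's proof essentially step for step. Both compute $\Delta V_t=2a^2\pm 2a\bigl(P_i(t)-P_j(t)\bigr)$ in the two cases, average with the Fermi probabilities, use $p_{ij}-p_{ji}=\tanh\bigl(\beta(P_i(t)-P_j(t))/2\bigr)$, and conclude from $D\tanh(\beta D/2)\ge 0$.
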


\begin{proof}
Let

$$
d_{ij}(t)=P_i(t)-P_j(t).
$$
If $i$ defeats $j$, then

$$
\begin{aligned}
\Delta V_t
&=(P_i(t)+a)^2-P_i(t)^2
 +(P_j(t)-a)^2-P_j(t)^2\\
&=2a^2+2a d_{ij}(t).
\end{aligned}
$$
If $j$ defeats $i$, then

$$
\Delta V_t=2a^2-2a d_{ij}(t).
$$
Therefore,
\begin{align*}
&\mathbb{E}\!\left[
\Delta V_t
\,\middle|\,
P_i(t),P_j(t),\{i,j\}\text{ interacts at time }t
\right]\\
&\quad=
p_{ij}(t)\bigl(2a^2+2a d_{ij}(t)\bigr)
+p_{ji}(t)\bigl(2a^2-2a d_{ij}(t)\bigr)\\
&\quad=
2a^2+2a d_{ij}(t)
\bigl(p_{ij}(t)-p_{ji}(t)\bigr).
\end{align*}
By the Fermi rule,

$$
p_{ij}(t)
=
\frac{1}{1+\exp\{-\beta d_{ij}(t)\}},
$$
and, since

$$
p_{ji}(t)=1-p_{ij}(t),
$$
we obtain

$$
p_{ij}(t)-p_{ji}(t)
=
\tanh\left(\frac{\beta d_{ij}(t)}{2}\right).
$$
Consequently,

$$
\mathbb{E}\!\left[
\Delta V_t
\,\middle|\,
P_i(t),P_j(t),\{i,j\}\text{ interacts at time }t
\right]
=
2a^2
+
2a d_{ij}(t)
\tanh\left(\frac{\beta d_{ij}(t)}{2}\right).
$$
Since $\beta\geq0$,

$$
d_{ij}(t)
\tanh\left(\frac{\beta d_{ij}(t)}{2}\right)\geq0.
$$
Hence

$$
\mathbb{E}\!\left[
\Delta V_t
\,\middle|\,
P_i(t),P_j(t),\{i,j\}\text{ interacts at time }t
\right]
\geq2a^2.
$$

\end{proof}

\begin{lemma}[Generator and martingale decomposition]\label{Lemma:Generator and martingale decomposition}
The generator of $V_t$ is given by

$$
\begin{aligned}
\mathcal{L}V_t
&=
\sum_{\{i,j\}\in E}
\mathbb{E}\!\left[
\Delta V_t
\,\middle|\,
P_i(t),P_j(t),\{i,j\}\text{ interacts at time }t
\right]\\
&=
\sum_{\{i,j\}\in E}
\left[
2a^2
+
2a\bigl(P_i(t)-P_j(t)\bigr)
\tanh\left(
\frac{\beta\bigl(P_i(t)-P_j(t)\bigr)}{2}
\right)
\right]\\
&\geq 2a^2|E|.
\end{aligned}
$$
Consequently, there exists a càdlàg martingale $(M_t)_{t\geq0}$ with $M_0=0$ such that

$$
V_t
=
V_0+\int_0^t\mathcal{L}V_s\,ds+M_t,
\qquad t\geq0.
$$

\end{lemma}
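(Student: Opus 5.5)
The plan is to treat $(P_1(t),\dots,P_n(t))$ as a continuous-time Markov jump process on the lattice $\mathbb{R}^n$ (indeed on $P(0)+a\mathbb{Z}^n$). Its jumps are driven by the $|E|$ independent rate-one Poisson clocks on the edges, and the jump kernel is given by the Fermi probabilities. For $f(P)=\sum_i P_i^2$, the generator acts as
\[
\mathcal{L}f(P)=\sum_{\{i,j\}\in E}\Bigl[p_{ij}\bigl(f(P+a e_i-a e_j)-f(P)\bigr)+p_{ji}\bigl(f(P-a e_i+a e_j)-f(P)\bigr)\Bigr].
\]
Each summand is exactly the conditional expectation computed in the preceding lemma, so the second equality in the display follows at once. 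The lower bound $2a^2|E|$ is then obtained by summing the per-edge bound $2a^2$ over $E$, using $d\tanh(\beta d/2)\ge 0$.

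For the martingale decomposition, I would define
\[
M_t:=V_t-V_0-\int_0^t\mathcal{L}V_s\,ds
\]
and show it is a martingale. The standard route is Dynkin's formula for pure-jump processes. Write
\[
V_t-V_0=\sum_{e\in E}\int_0^t \Delta_e V_{s-}\,dN_e(s),
\]
where $N_e$ is the Poisson clock of edge $e$ and $\Delta_e V_{s-}$ is the (randomized) jump produced by edge $e$. I will then compensate each $N_e$ by $ds$. Because the outcome randomization is independent of the past given $P(s-)$, the compensated integrals are local martingales, and the compensator equals $\int_0^t\mathcal{L}V_s\,ds$. The paths are càdlàg because there are only finitely many jumps on bounded intervals; the total rate is $|E|<\infty$.

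The main obstacle is upgrading the local martingale to a true martingale, since $\mathcal{L}V$ grows linearly in $|P|$ and is therefore unbounded. I would handle this by an a priori bound. Since each jump changes every payoff by at most $a$, we have
\[
|P_i(t)|\le |P_i(0)|+aN(t),
\]
where $N(t)$ is the total number of rings, a Poisson variable with mean $|E|t$. Consequently $\sup_{s\le t}V_s$ and $\int_0^t|\mathcal{L}V_s|\,ds$ are dominated by a polynomial in $N(t)$. Such a polynomial has all moments finite, because Poisson variables have moments of every order. Localizing at $\tau_k=\inf\{t: N(t)\ge k\}$ and applying dominated convergence then gives $\mathbb{E}[M_t\mid\mathcal{F}_s]=M_s$ with $M_t\in L^1$, indeed in $L^2$. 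Finally, $M_0=0$ holds by construction.
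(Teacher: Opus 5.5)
Your proposal is correct and follows the paper's route: the generator is the edge-sum of the per-edge conditional expectations from the preceding lemma, and the decomposition is the Dynkin martingale of the pure-jump process. The paper only cites the ``standard Dynkin martingale'' and never checks integrability, so your a priori bound $|P_i(t)|\le|P_i(0)|+aN(t)$, the Poisson moments, and the localization at $\tau_k$ supply exactly the step it leaves implicit (with $P(0)$ taken deterministic, as the paper tacitly assumes).
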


\begin{proof}
Since each edge $\{i,j\}\in E$ has an independent Poisson interaction process with rate $1$, the generator is obtained by summing the conditional expected changes in $V_t$ over all edges. By the previous lemma,

$$
\begin{aligned}
\mathcal{L}V_t
&=
\sum_{\{i,j\}\in E}
\mathbb{E}\!\left[
\Delta V_t
\,\middle|\,
P_i(t),P_j(t),\{i,j\}\text{ interacts at time }t
\right]\\
&=
\sum_{\{i,j\}\in E}
\left[
2a^2
+
2a\bigl(P_i(t)-P_j(t)\bigr)
\tanh\left(
\frac{\beta\bigl(P_i(t)-P_j(t)\bigr)}{2}
\right)
\right]\\
&\geq2a^2|E|.
\end{aligned}
$$
The martingale decomposition follows from the standard Dynkin martingale for the pure-jump Markov process. In particular,

$$
M_t
=
V_t-V_0-\int_0^t\mathcal{L}V_s\,ds
$$
is a càdlàg martingale with $M_0=0$. Therefore,

$$
V_t
=
V_0+\int_0^t\mathcal{L}V_s\,ds+M_t.
$$

\end{proof}

\begin{lemma}[Quadratic variation bound]\label{Lemma:Quadratic variation bound}
The predictable quadratic variation of the martingale $(M_t)_{t\geq0}$ is defined by

$$
\langle M\rangle_t
=
\int_0^t
\sum_{\{i,j\}\in E}
\operatorname{Var}\!\left(
\Delta V_s
\,\middle|\,
P_i(s),P_j(s),\{i,j\}\text{ interacts at time }s
\right)ds.
$$
Then,

$$
\langle M\rangle_t\le Ct,
\qquad t\geq0,
$$
for some constant $C=C(a,\beta,|E|)$. Consequently,

$$
\langle M\rangle_t=O(t).
$$

\end{lemma}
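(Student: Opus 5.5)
The plan is to compute the conditional variance of a single jump of $V$ exactly, in the same way the mean was computed in the lemma on the conditional expected increase of payoff dispersion. I will then show that this variance is bounded uniformly in the payoff difference, because the Fermi probabilities decay exponentially fast. Integrating in time and summing over the edges then gives the linear bound.

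\textbf{Step 1 (exact conditional variance).} Fix an edge $\{i,j\}\in E$ and write $d=d_{ij}(s)=P_i(s)-P_j(s)$ and $p=p_{ij}(s)$. From the proof of the conditional-expectation lemma, given that $\{i,j\}$ interacts, $\Delta V_s$ takes only two values. It equals $2a^2+2ad$ with probability $p$ and $2a^2-2ad$ with probability $1-p$. The two values differ by $4ad$, so the variance of this two-point distribution is
\[
\operatorname{Var}\!\left(\Delta V_s\,\middle|\,P_i(s),P_j(s),\{i,j\}\text{ interacts at time }s\right)=16a^2d^2\,p(1-p).
\]
By the Fermi rule, $p(1-p)=\frac{e^{-\beta d}}{(1+e^{-\beta d})^2}=\frac{1}{4\cosh^2(\beta d/2)}$. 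Hence the conditional variance equals $4a^2d^2\operatorname{sech}^2(\beta d/2)$.

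\textbf{Step 2 (uniform bound in $d$).} Substitute $u=\beta d/2$. The expression becomes $\frac{16a^2}{\beta^2}\,u^2\operatorname{sech}^2 u$. The function $u\mapsto u^2\operatorname{sech}^2u$ is continuous, and it tends to $0$ as $|u|\to\infty$, since $\operatorname{sech}^2u\le 4e^{-2|u|}$. So $K:=\sup_{u\in\mathbb{R}}u^2\operatorname{sech}^2u<\infty$; one can also bound it crudely by $K\le \sup_{u}4u^2e^{-2|u|}=4e^{-2}$. Each edge's conditional variance is therefore at most $16a^2K/\beta^2$, uniformly over all payoff configurations and all times $s$.

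\textbf{Step 3 (integrate).} Sum the bound over the $|E|$ edges and integrate over $[0,t]$ in the stated definition of $\langle M\rangle_t$. This gives $\langle M\rangle_t\le Ct$ with $C=16a^2K|E|/\beta^2$, which depends only on $(a,\beta,|E|)$, and hence $\langle M\rangle_t=O(t)$.

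The computation itself is routine. The only point needing care is Step 2, namely recognising that the potentially large factor $d^2$ is exactly compensated by the vanishing Bernoulli variance $p(1-p)$. This compensation is why the bound does not grow with the payoff spread even though $V_t$ itself grows. A caveat for later use in the martingale strong law: the compensator of the squared jumps of the Dynkin martingale is naturally $\int_0^t\sum_{E}\mathbb{E}[(\Delta V_s)^2\mid\cdot]\,ds$. This contains the additional squared-mean term, which grows like $d^2$. I would therefore work with the quantity as defined in the statement (the conditional variances) and flag this distinction where the lemma is applied.
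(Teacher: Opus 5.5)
Your proof is correct and follows the paper's argument step for step. The shared steps are the two-point variance $16a^2d^2p(1-p)=4a^2d^2\operatorname{sech}^2(\beta d/2)$, the boundedness of $u\mapsto u^2\operatorname{sech}^2(\beta u/2)$ for $\beta>0$ (which you make explicit via $K\le 4e^{-2}$), and integration over $[0,t]$ and the $|E|$ edges.

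Your closing caveat is also well founded and worth keeping. The genuine predictable quadratic variation of the Dynkin martingale is $\int_0^t\sum_{E}\mathbb{E}[(\Delta V_s)^2\mid\cdot\,]\,ds$, whose integrand also contains the squared conditional mean $\bigl(2a^2+2ad\tanh(\beta d/2)\bigr)^2\sim 4a^2d^2$. So the quantity bounded here does not by itself justify $\mathbb{E}[M_t^2]\le Ct$, which the subsequent martingale strong law relies on.
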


\begin{proof}
For an interaction between agents $i$ and $j$, let

$$
d_{ij}(t)=P_i(t)-P_j(t).
$$
The two possible values of $\Delta V_t$ are

$$
2a^2+2ad_{ij}(t)
\quad\text{and}\quad
2a^2-2ad_{ij}(t),
$$
with probabilities $p_{ij}(t)$ and $p_{ji}(t)$, respectively. Hence

$$
\begin{aligned}
&\operatorname{Var}\!\left(
\Delta V_t
\,\middle|\,
P_i(t),P_j(t),\{i,j\}\text{ interacts at time }t
\right)\\
&\qquad
=16a^2d_{ij}(t)^2p_{ij}(t)p_{ji}(t).
\end{aligned}
$$
By the Fermi rule,

$$
p_{ij}(t)p_{ji}(t)
=
\frac{1}{4}
\operatorname{sech}^2
\left(\frac{\beta d_{ij}(t)}{2}\right).
$$
Therefore,
$$
\operatorname{Var}\!\left(
\Delta V_t
\,\middle|\,
P_i(t),P_j(t),\{i,j\}\text{ interacts at time }t
\right)
=
4a^2d_{ij}(t)^2
\operatorname{sech}^2
\left(\frac{\beta d_{ij}(t)}{2}\right).
$$
Since $\beta>0$, the function

$$
u\longmapsto
u^2\operatorname{sech}^2\left(\frac{\beta u}{2}\right)
$$
is bounded on $\mathbb R$. Thus there exists $C_0=C_0(a,\beta)<\infty$ such that

$$
\operatorname{Var}\!\left(
\Delta V_t
\,\middle|\,
P_i(t),P_j(t),\{i,j\}\text{ interacts at time }t
\right)
\le C_0.
$$
Since each edge has interaction rate $1$,

$$
\langle M\rangle_t
\le C_0|E|t.
$$
Taking $C=C_0|E|$ completes the proof.
\end{proof}

\begin{lemma}[Martingale strong law]\label{Lemma:Martingale strong law}
$(M_t)_{t\geq0}$ is square-integrable and

$$
\frac{M_t}{t}\longrightarrow0
\quad\text{as}\quad t\to\infty\quad\text{a.s.}
$$

\end{lemma}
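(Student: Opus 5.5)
We will establish square-integrability first and then apply a strong law of large numbers for martingales with linearly bounded predictable quadratic variation.

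\emph{Square-integrability.} Since $|\Delta P_i|\le a$ at each jump, we have $|P_i(t)|\le |P_i(0)|+aN_t$. Here $N_t$ is the total number of interactions up to time $t$, which is Poisson with mean $|E|t$. Therefore
\[
V_t\le n\bigl(\max_i|P_i(0)|+aN_t\bigr)^2 .
\]
Since $N_t$ has moments of all orders, $\mathbb{E}[V_t^2]<\infty$. Moreover, by Lemma~\ref{Lemma:Generator and martingale decomposition},
\[
|\mathcal{L}V_s|\le |E|\bigl(2a^2+2a\,|d|\bigr)\le |E|\bigl(2a^2+4a\max_i|P_i(s)|\bigr).
\]
Hence $\int_0^t\mathcal{L}V_s\,ds$ is also in $L^2$, and so $M_t\in L^2$ for each $t$. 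This integrability is also what justifies that the Dynkin process is a true martingale rather than merely a local one. The standard compensator identity for pure-jump Markov processes then gives that $M_t^2-\langle M\rangle_t$ is a martingale, with $\langle M\rangle$ as in Lemma~\ref{Lemma:Quadratic variation bound}. Consequently $\mathbb{E}[M_t^2]=\mathbb{E}\langle M\rangle_t\le Ct$.

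\emph{Strong law.} I would use a dyadic-block argument combined with Doob's maximal inequality. For $k\ge1$,
\[
\mathbb{P}\Bigl(\sup_{0\le t\le 2^{k}}|M_t|>\varepsilon 2^{k-1}\Bigr)\le \frac{\mathbb{E}[M_{2^k}^2]}{\varepsilon^2 4^{k-1}}\le \frac{4C}{\varepsilon^2 2^{k}},
\]
and the right-hand side is summable in $k$. By Borel--Cantelli, almost surely for all large $k$ we have $\sup_{t\le 2^k}|M_t|\le \varepsilon 2^{k-1}$. For $t\in[2^{k-1},2^k]$ this gives $|M_t|/t\le \varepsilon$. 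Letting $\varepsilon\downarrow0$ along a countable sequence yields $M_t/t\to0$ a.s.

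Alternatively, one could cite the martingale convergence theorem applied to $\int_0^t (1+s)^{-1}dM_s$, whose bracket is bounded by $\int C(1+s)^{-2}ds<\infty$, followed by Kronecker's lemma. The direct Doob argument avoids stochastic integration and is cleaner.

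\emph{Main obstacle.} The main obstacle is the rigorous justification that $M$ is a genuine square-integrable martingale with predictable bracket given by the stated formula, since $V$ is unbounded. I would handle this by localizing at the stopping times $\tau_m=\inf\{t:N_t\ge m\}$, where all quantities are bounded. On the stopped process the Dynkin formula is classical. One then passes $m\to\infty$ using the Poisson moment bounds above together with dominated convergence.
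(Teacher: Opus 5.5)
\noindent The dyadic Doob--Borel--Cantelli argument and your integrability estimates are fine. The gap is the step where you assert that $M_t^2-\langle M\rangle_t$ is a martingale with $\langle M\rangle$ as in Lemma~\ref{Lemma:Quadratic variation bound}; that identification is wrong. The drift of $V$ is compensated continuously in time, so the jumps of $M$ are the full, uncentered jumps $\Delta V$. Hence the compensator of $[M]_t=\sum_{s\le t}(\Delta V_s)^2$ is $\int_0^t\bigl(\mathcal{L}(V^2)-2V\mathcal{L}V\bigr)\,ds$: the rate times the conditional \emph{second moment} of the jump, not its conditional variance. (For $N$ a Poisson process, the martingale $N_t-t$ has jumps with zero conditional variance, yet its bracket is $t$.) Here this gives
\[
\langle M\rangle_t=\int_0^t\sum_{\{i,j\}\in E}\Bigl(4a^4+4a^2d_{ij}(s)^2+8a^3d_{ij}(s)\tanh\bigl(\tfrac{\beta d_{ij}(s)}{2}\bigr)\Bigr)\,ds,
\]
which is the bounded variance term plus the square of the drift $2a^2+2ad_{ij}\tanh(\beta d_{ij}/2)$, and that square grows with $|d_{ij}|$. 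Carrying out your localization at $\tau_m$ would produce exactly this formula. So $\mathbb{E}[M_{2^k}^2]\le C2^k$ is not available, and your Doob bounds are not summable. The paper's own proof makes the same misidentification, so following it does not get you past this step.

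In fact the conclusion itself fails. Take $n=2$ and $D=P_1-P_2$, so $V=D^2/2$. The embedded chain of $D$ is a birth--death chain in which the ratio of steps toward $0$ to steps away from $0$ is $e^{-\beta|D|}$, so it is transient. The leader loses at level $|D|$ with probability at most $e^{-\beta|D|}$, so almost surely there is a random time $\tau$ after which the leader wins every interaction. Thus $|D_t|=c+2aN_t$ for $t\ge\tau$, where $N$ is the rate-one clock and $c$ is a random constant. Substitute this into $M_t=V_t-V_0-\int_0^t\mathcal{L}V_s\,ds$ and use that $|D_s|\bigl(1-\tanh(\beta|D_s|/2)\bigr)$ is integrable in $s$. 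One finds
\[
M_t=2a^2\Bigl(N_t^2-\int_0^t(2N_s+1)\,ds\Bigr)+2ac\,(N_t-t)+R_t,
\]
with $R_t$ converging a.s.\ to a finite limit. The first term is a martingale with bracket $4a^4\int_0^t(2N_s+1)^2\,ds\sim\tfrac{16}{3}a^4t^3$. Consequently $M_t/t^{3/2}$ converges in law to $\mathcal{N}(0,\tfrac{16}{3}a^4)$, so $|M_t|/t\to\infty$ in probability and $M_t/t\to0$ a.s.\ is false. The correct martingale strong law, $M_t/\langle M\rangle_t\to0$ on $\{\langle M\rangle_\infty=\infty\}$, gives nothing like $M_t/t\to0$ here. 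This does not contradict Theorem~\ref{Thm:Almost-sure linear growth of payoff dispersion}, since in this example $V_t\sim2a^2t^2$. It shows only that the theorem cannot be proved through this lemma.
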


\begin{proof}
By Lemma~\ref{Lemma:Quadratic variation bound},

$$
\langle M\rangle_t\le Ct,
\qquad t\geq0.
$$
Therefore,

$$
\mathbb{E}\!\left[\langle M\rangle_t\right]
\le Ct<\infty.
$$
Hence, $(M_t)_{t\geq0}$ is square-integrable and

$$
\mathbb{E}[M_t^2]
=
\mathbb{E}\!\left[\langle M\rangle_t\right]
\le Ct.
$$
Moreover,

$$
\frac{\langle M\rangle_t}{t^2}
\leq
\frac{C}{t}
\longrightarrow0
\qquad\text{as }t\to\infty.
$$
The strong law for square-integrable martingales therefore yields

$$
\frac{M_t}{t}\longrightarrow0
\quad\text{as}\quad t\to\infty\quad\text{a.s.}
$$

\end{proof}

\begin{proof}[\bf Proof of Theorem~\ref{Thm:Almost-sure linear growth of payoff dispersion}]
By Lemma~\ref{Lemma:Generator and martingale decomposition},

$$
V_t
=
V_0+\int_0^t\mathcal L V_s\,ds+M_t
$$
and

$$
\mathcal L V_s\geq2a^2|E|.
$$
Hence,

$$
V_t
\geq
V_0+2a^2|E|t+M_t.
$$
Dividing by $t$ gives

$$
\frac{V_t}{t}
\geq
\frac{V_0}{t}
+
2a^2|E|
+
\frac{M_t}{t}.
$$
Since
\[
\frac{V_0}{t}\to0
\quad\text{as}\quad t\to\infty,
\]
and, by Lemma~\ref{Lemma:Martingale strong law},
\[
\frac{M_t}{t}\to0
\quad\text{as}\quad t\to\infty\quad\text{a.s.},
\]
we obtain

$$
\liminf_{t\to\infty}\frac{V_t}{t}
\geq
2a^2|E|
\qquad\text{a.s.}
$$
In particular,

$$
V_t\to\infty
\quad\text{as}\quad t\to\infty\quad\text{a.s.}
$$

\end{proof}

\begin{proof}[\bf Proof of Corollary~\ref{Corollary:Almost-sure payoff polarization}]
Since

$$
V_t=\sum_{i=1}^nP_i(t)^2\to\infty
\quad\text{as}\quad t\to\infty\quad\text{a.s.},
$$
at least one of the positive or negative parts of the payoff vector must become unbounded. Because

$$
\sum_{i=1}^nP_i(t)=0,
$$
the positive and negative parts must balance each other. Therefore,
\[
\max_{i\in[n]}P_i(t)\to+\infty
\quad\text{as}\quad t\to\infty\quad\text{a.s.},
\]
and
\[
\min_{i\in[n]}P_i(t)\to-\infty
\quad\text{as}\quad t\to\infty\quad\text{a.s.}
\]
\end{proof}

\begin{proof}[\bf Proof of Corollary~\ref{Corollary:Almost-sure growth of the payoff range}]
Since

$$
\sum_{i=1}^nP_i(t)=0,
$$
we have

$$
\left(\sum_{i=1}^nP_i(t)\right)^2=0.
$$
Expanding the left-hand side gives

$$
\sum_{i=1}^nP_i(t)^2
+
2\sum_{1\leq i<j\leq n}P_i(t)P_j(t)=0.
$$
Therefore,

$$
2\sum_{1\leq i<j\leq n}P_i(t)P_j(t)
=
-\sum_{i=1}^nP_i(t)^2
=
-V_t.
$$
On the other hand,

$$
\begin{aligned}
\sum_{1\leq i<j\leq n}
\bigl(P_i(t)-P_j(t)\bigr)^2
&=
\sum_{1\leq i<j\leq n}
\left(P_i(t)^2+P_j(t)^2-2P_i(t)P_j(t)\right)\\
&=
(n-1)\sum_{i=1}^nP_i(t)^2
-
2\sum_{1\leq i<j\leq n}P_i(t)P_j(t).
\end{aligned}
$$
Each term $P_i(t)^2$ appears exactly $n-1$ times in the sum. Using

$$
\sum_{i=1}^nP_i(t)^2=V_t
$$
and

$$
2\sum_{1\leq i<j\leq n}P_i(t)P_j(t)=-V_t,
$$
we obtain

$$
\sum_{1\leq i<j\leq n}
\bigl(P_i(t)-P_j(t)\bigr)^2
=
(n-1)V_t+V_t
=
nV_t.
$$
Thus,

$$
V_t
=
\frac{1}{n}
\sum_{1\leq i<j\leq n}
\bigl(P_i(t)-P_j(t)\bigr)^2.
$$
Now, by definition of $R_t$,

$$
\left|P_i(t)-P_j(t)\right|\leq R_t
$$
for every $1\leq i<j\leq n$. Hence,

$$
\begin{aligned}
V_t
&\leq
\frac{1}{n}\binom{n}{2}R_t^2\\
&=
\frac{n-1}{2}R_t^2.
\end{aligned}
$$
Consequently,

$$
\frac{R_t}{\sqrt{t}}
\geq
\sqrt{\frac{2V_t}{(n-1)t}}.
$$
By Theorem~\ref{Thm:Almost-sure linear growth of payoff dispersion},

$$
\liminf_{t\to\infty}\frac{V_t}{t}
\geq
2a^2|E|
\qquad\text{a.s.}
$$
Therefore,

$$
\liminf_{t\to\infty}\frac{R_t}{\sqrt{t}}
\geq
2a\sqrt{\frac{|E|}{n-1}}
\qquad\text{a.s.}
$$
\end{proof}

\begin{proof}[\bf Proof of Corollary~\ref{Corollary:Two-agent-garbage-concentration}]
Since $n=2$ and $G$ is connected, $G$ consists of a single edge, and
hence $|E|=1$. Define
\[
D(t)=P_1(t)-P_2(t).
\]
Then
\[
R_t
=\max_{i\in[2]}P_i(t)-\min_{i\in[2]}P_i(t)
=|P_1(t)-P_2(t)|
=|D(t)|.
\]
By Corollary~\ref{Corollary:Almost-sure growth of the payoff range},
\[
\liminf_{t\to\infty}\frac{|D(t)|}{\sqrt{t}}
\geq 2a
\qquad\text{a.s.}
\]
Consequently,
\[
|D(t)|\longrightarrow\infty
\quad\text{as}\quad t\to\infty\quad\text{a.s.}
\]
Since $D(t)$ changes by exactly $2a$ or $-2a$ at every interaction,
its sign is therefore eventually constant. Thus, with probability one,
either $D(t)>0$ eventually or $D(t)<0$ eventually.

Consider the event on which $D(t)>0$ eventually. Then there exists a
finite random time $T_0$ such that
\[
D(t)>0,\qquad t\geq T_0,
\]
so agent $1$ is the payoff leader thereafter. Moreover,
\[
\liminf_{t\to\infty}\frac{D(t)}{\sqrt{t}}
\geq 2a
\qquad\text{a.s.}
\]
Choose a deterministic constant $c\in(0,2a)$. Then, for all sufficiently
large $t$,
\[
D(t)\geq c\sqrt{t}.
\]

Let
\[
0<T_1<T_2<\cdots
\]
be the successive interaction times of the unique edge. The
interarrival times are i.i.d. $\operatorname{Exp}(1)$ random variables.
Hence, by the strong law of large numbers,
\[
\frac{T_k}{k}\longrightarrow1
\quad\text{as}\quad k\to\infty\quad\text{a.s.}
\]
In particular, $T_k\geq k/2$ for all sufficiently large $k$. Therefore,
for all sufficiently large $k$,
\[
D(T_k)\geq c\sqrt{T_k}
\geq c\sqrt{\frac{k}{2}}.
\]

At time $T_k$, the conditional probability that agent $1$ loses the
interaction is
\[
q_k
=
\frac{1}{1+e^{\beta D(T_k)}}
\leq e^{-\beta D(T_k)}
\leq
\exp\left(-\beta c\sqrt{\frac{k}{2}}\right).
\]
Thus
\[
\sum_{k=1}^{\infty}q_k<\infty
\qquad\text{a.s.}
\]
By the conditional Borel--Cantelli lemma, agent $1$ loses only finitely
many interactions almost surely. Hence there exists a finite random
index $K$ such that agent $1$ wins every interaction after $T_K$.

Suppose first that $0<\alpha<1$. After $T_K$, every interaction
multiplies agent $1$'s garbage by $1-\alpha$. Since the rate-one Poisson
process has infinitely many interaction times almost surely,
\[
x_1(t)
=
(1-\alpha)^{N(t)-N(T_K)}x_1(T_K)
\longrightarrow0
\quad\text{as}\quad t\to\infty\quad\text{a.s.},
\]
where $N(t)$ denotes the number of interactions up to time $t$.
By conservation of total garbage,
\[
x_2(t)=M-x_1(t)
\longrightarrow M
\quad\text{as}\quad t\to\infty\quad\text{a.s.}
\]

If $\alpha=1$, agent $1$'s garbage becomes exactly zero at the first
interaction after $T_K$. Hence
\[
x_1(t)\longrightarrow0,
\qquad
x_2(t)\longrightarrow M
\quad\text{as}\quad t\to\infty\quad\text{a.s.}
\]

The case in which $D(t)<0$ eventually is symmetric and yields
\[
x_2(t)\longrightarrow0,
\qquad
x_1(t)\longrightarrow M
\quad\text{as}\quad t\to\infty\quad\text{a.s.}
\]
Therefore, there exists a random variable $I\in[2]$ such that
\[
x_I(t)\longrightarrow0,
\qquad
x_{3-I}(t)\longrightarrow M
\quad\text{as}\quad t\to\infty\quad\text{a.s.}
\]
\end{proof}

\subsection{Fermi Dynamics on connected Graphs}

We now consider the garbage redistribution process under complete disposal. By tracking the number of agents holding positive garbage, we show that this number is nonincreasing. The connectivity of the graph and the positivity of the Fermi interaction probabilities are then used to establish finite-time garbage concentration.

\begin{proof}[\bf Proof of Theorem~\ref{Theorem:Garbage-concentration-alpha-one}]
Define
\[
S(t)=\#\{i\in[n]:x_i(t)>0\}.
\]
Since $\alpha=1$, whenever an edge with two positive-garbage
endpoints interacts, the winner's garbage is completely transferred to
the loser. Hence
\[
(+,+)\longrightarrow(0,+),
\]
and therefore $S(t)$ decreases by one. In particular, $S(t)$ is a
nonincreasing, integer-valued process. Thus there exist a finite random
time $T$ and a random variable $S\in\{1,\ldots,n\}$ such that
\[
S(t)=S
\qquad\text{for all }t\geq T
\quad\text{a.s.}
\]

We claim that $S=1$ almost surely. Suppose, to the contrary, that
\[
\mathbb P(S\geq2)>0.
\]
Consider a realization on the event $\{S\geq2\}$. For all $t\geq T$,
there are at least two agents with positive garbage. Choose two such
agents, say $v_0$ and $v_r$. Since $G$ is finite and connected, there is
a finite path
\[
v_0,v_1,\ldots,v_r
\]
connecting them.

We show that, conditional on the state at time $T$, there is a
positive probability that an edge with two positive-garbage
endpoints is created after time $T$. Consider the following finite
sequence of events. First, the edge $\{v_0,v_1\}$ interacts and $v_0$
wins. Then $\{v_1,v_2\}$ interacts and $v_1$ wins, and so on, until
$\{v_{r-1},v_r\}$ interacts and $v_{r-1}$ wins.

After the first interaction, $v_1$ has positive garbage. After the
second, $v_2$ has positive garbage, and this continues along the path.
Consequently, immediately after the last prescribed interaction,
both $v_{r-1}$ and $v_r$ have positive garbage. Thus
$\{v_{r-1},v_r\}$ is a $++$ edge.

Every prescribed edge interaction has positive probability of
being the next interaction, because each edge has a rate-one Poisson
clock. Moreover, conditional on the state immediately before each
prescribed interaction, the required winner has positive
Fermi probability, since
\[
0<
\frac{1}{1+e^{-\beta(P_i-P_j)}}<1.
\]
Hence the entire finite sequence has positive conditional
probability.

Once the resulting $++$ edge interacts, its state changes from
\[
(+,+)\longrightarrow(0,+),
\]
so that
\[
S(t+)=S(t)-1=S-1
\]
at some finite time after $T$. This contradicts
\[
S(t)=S
\qquad\text{for all }t\geq T.
\]
Therefore
\[
\mathbb P(S\geq2)=0,
\]
and hence
\[
S=1
\qquad\text{a.s.}
\]

Since the total garbage is conserved and equals $M$, all garbage is
concentrated on a single agent at each time after some finite random
time. Therefore there exists a finite random time $T<\infty$ such that
\[
x_{(n)}(t)=M
\qquad\text{for all }t\geq T
\quad\text{a.s.}
\]
\end{proof}

\section{Simulations}\label{sec:simulations}

In this section, we present numerical simulations to illustrate
the theoretical results established in Section~\ref{sec:main}.
We consider two settings: a competitive garbage-offloading
process on a random connected graph and the two-agent model
with different disposal efficiencies.

All simulations are performed using the Gillespie algorithm.
Since each edge is equipped with an independent rate-one
Poisson clock, the waiting times between consecutive
interactions are exponentially distributed with parameter
$|E|$. At each interaction, an edge is selected uniformly
from $E$, and the winner is determined according to the
Fermi rule. Throughout the simulations, we set
$M=1$, $a=0.01$, and $\beta=1$, with initially equal
garbage loads and zero payoffs.

\subsection{Payoff dispersion and garbage concentration
on a connected graph}

We first consider a random connected graph with $n=100$ vertices, generated by constructing a random spanning tree and subsequently adding each remaining edge independently with probability $0.01$.

We set $\alpha=1$ and simulate the process over the time
interval $[0,150]$. Figure~\ref{fig:general} displays
the evolution of four quantities associated with the
payoff and garbage distributions.

Figure~\ref{fig:general}(a) shows the scaled payoff
dispersion $V(t)/t$, together with the theoretical lower
bound $2a^2|E|$. Theorem~\ref{Thm:Almost-sure linear growth of payoff dispersion} establishes
that
\[
\liminf_{t\to\infty}\frac{V(t)}{t}
\geq 2a^2|E|
\qquad \text{a.s.}
\]
The simulated trajectory illustrates the growth of payoff
dispersion induced by the competitive interactions.

Figure~\ref{fig:general}(b) displays the maximum and
minimum payoffs. Their separation illustrates the
polarization of payoffs predicted by the theoretical
results. In particular, the maximum payoff tends to
increase while the minimum payoff tends to decrease,
reflecting the emergence of payoff inequality among
agents.

Figure~\ref{fig:general}(c) presents the scaled payoff
range $R(t)/\sqrt{t}$, together with the theoretical
lower bound
\[
2a\sqrt{\frac{|E|}{n-1}}.
\]
Corollary~\ref{Corollary:Almost-sure growth of the payoff range} establishes that
\[
\liminf_{t\to\infty}\frac{R(t)}{\sqrt{t}}
\geq 2a\sqrt{\frac{|E|}{n-1}}
\qquad \text{a.s.}
\]
This result provides a quantitative description of the
growth of payoff inequality.

Finally, Figure~\ref{fig:general}(d) shows the maximum
and minimum garbage loads. Since $\alpha=1$, the
theoretical results guarantee that, on a connected graph,
all garbage is eventually concentrated on a single
agent at each sufficiently large time. In terms of the
garbage order statistics, this means that
\[
x_{(n)}(t)=M
\]
for all sufficiently large $t$, almost surely.
The numerical experiment illustrates the evolution
of the garbage distribution toward this concentrated
configuration.

\begin{figure}[H]
\centering

\begin{minipage}{0.49\textwidth}
\centering
\includegraphics[width=\linewidth]{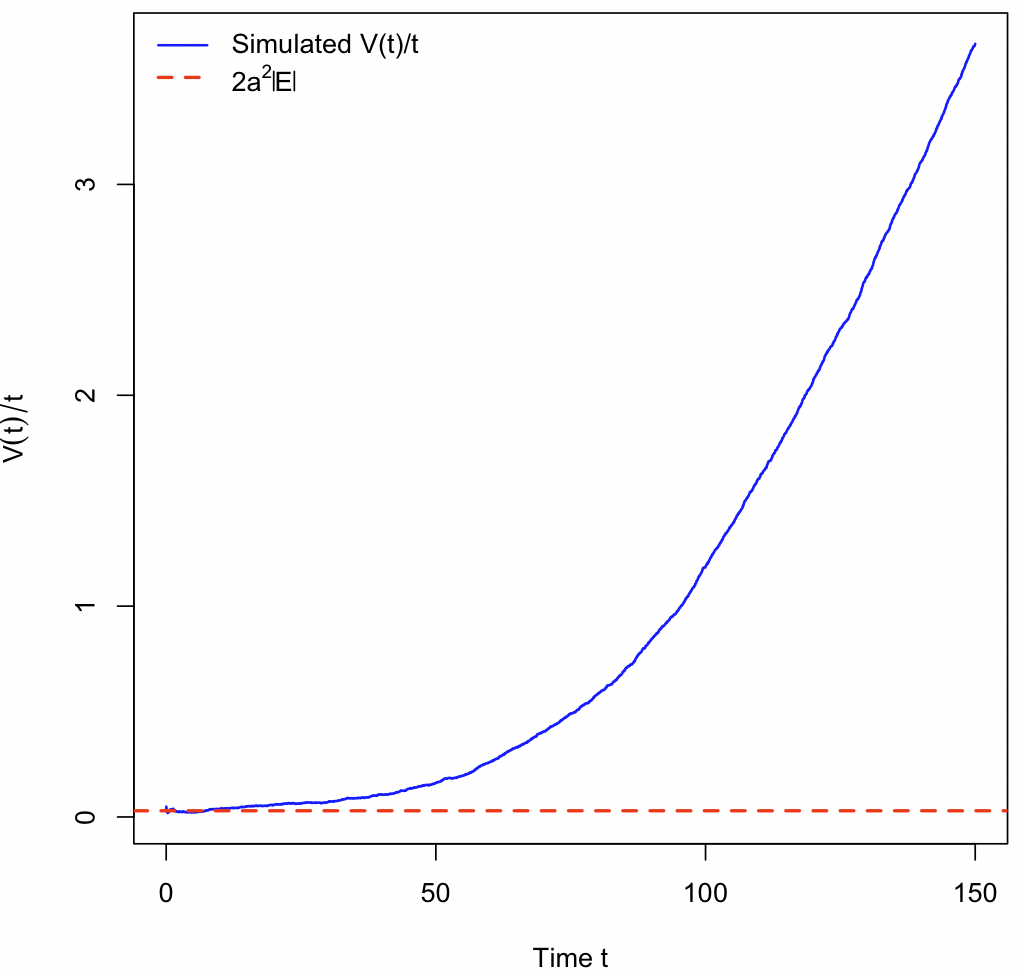}
\subcaption{Payoff dispersion}
\end{minipage}
\hfill
\begin{minipage}{0.49\textwidth}
\centering
\includegraphics[width=\linewidth]{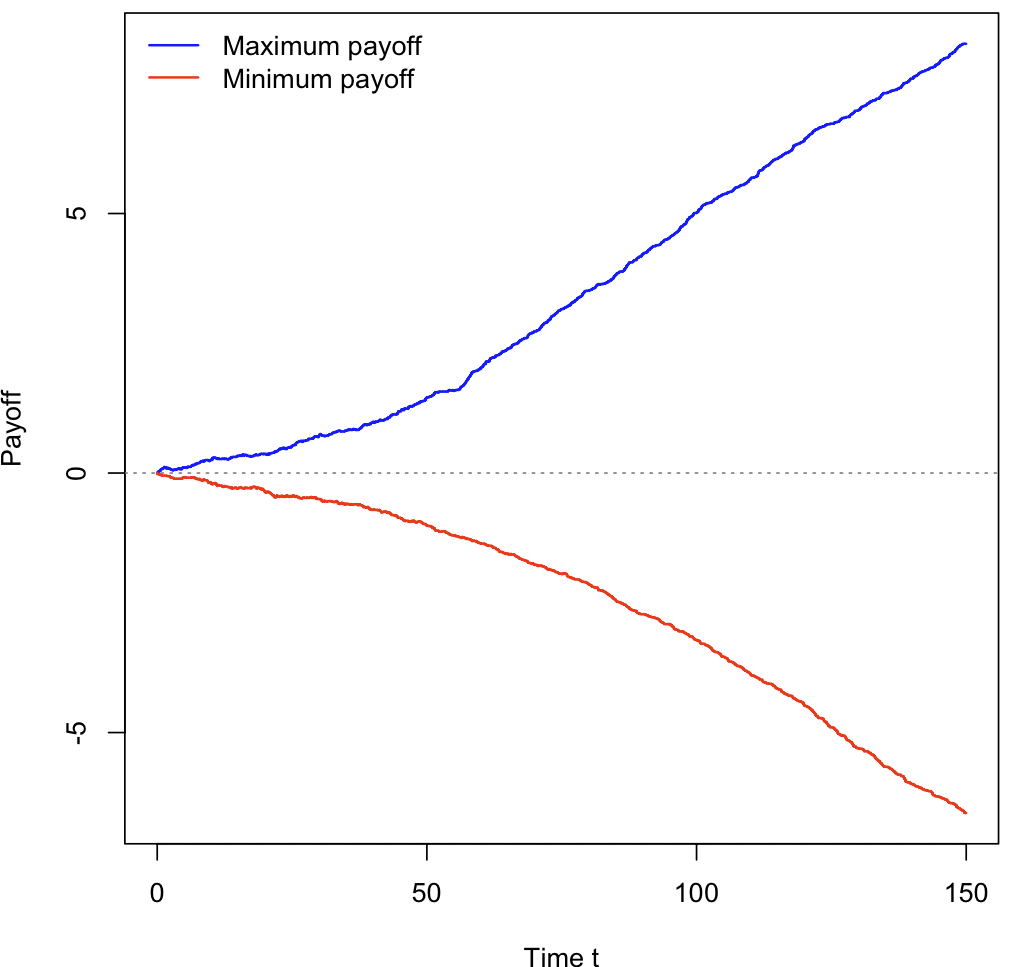}
\subcaption{Payoff polarization}
\end{minipage}

\vspace{0.3cm}

\begin{minipage}{0.49\textwidth}
\centering
\includegraphics[width=\linewidth]{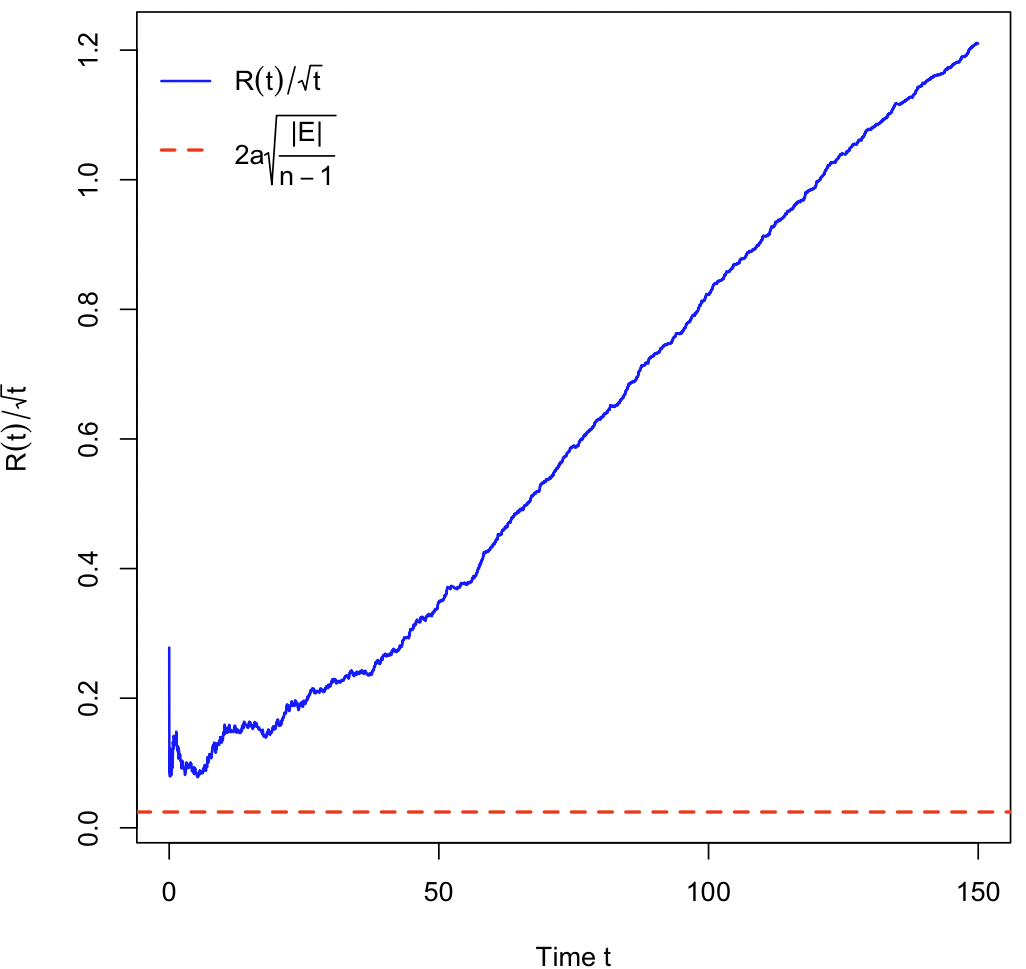}
\subcaption{Payoff range}
\end{minipage}
\hfill
\begin{minipage}{0.49\textwidth}
\centering
\includegraphics[width=\linewidth]{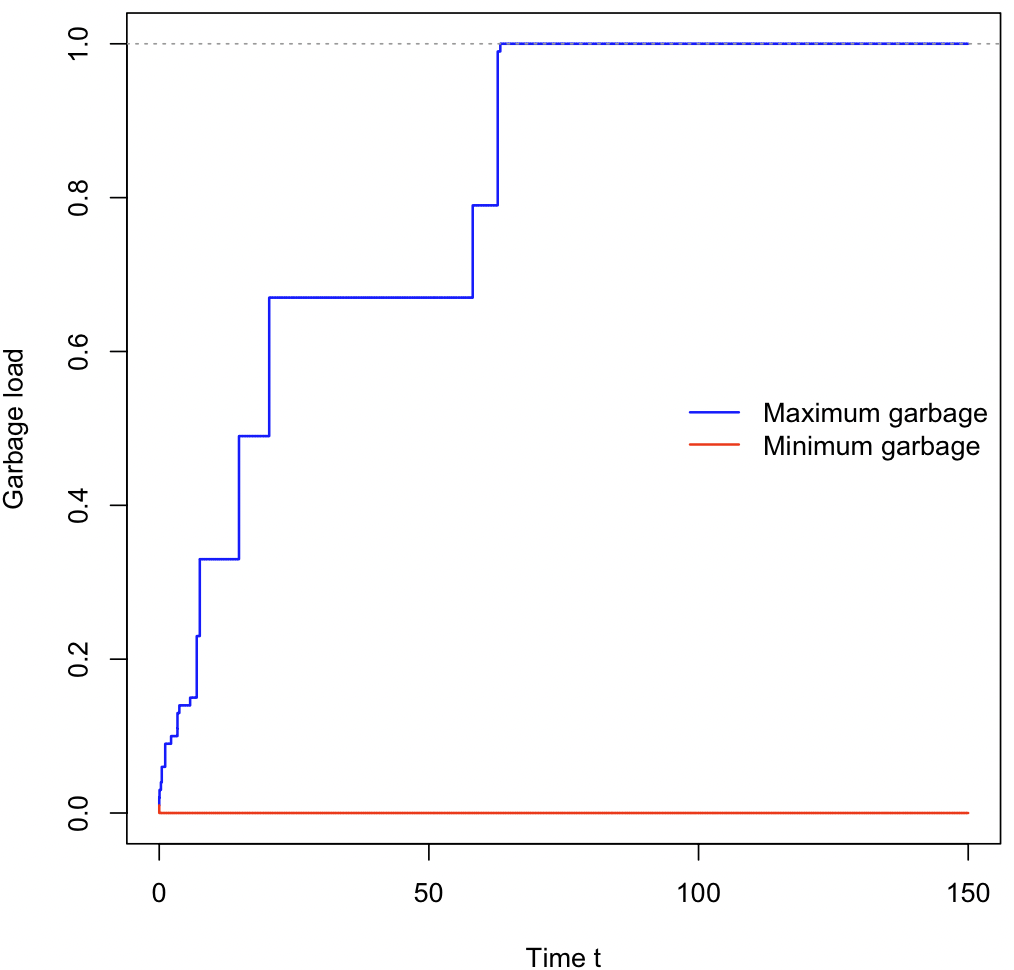}
\subcaption{Maximum and minimum garbage loads}
\end{minipage}

\caption{Evolution of the competitive garbage-offloading
process on a random connected graph with $n=100$,
$\alpha=1$, $\beta=1$, $a=0.01$, and $M=1$.
(a) Scaled payoff dispersion $V(t)/t$ and its
theoretical asymptotic lower bound.
(b) Maximum and minimum payoffs.
(c) Scaled payoff range $R(t)/\sqrt{t}$ and its
theoretical asymptotic lower bound.
(d) Maximum and minimum garbage loads.}
\label{fig:general}
\end{figure}

\subsection{Garbage concentration in the two-agent model}

We next consider the two-agent model, corresponding to $n=2$ and $E=\{\{1,2\}\}$. Unlike the general concentration result expressed in terms of garbage order statistics, Corollary~\ref{Corollary:Two-agent-garbage-concentration} establishes convergence of the garbage loads of individual agents. To illustrate this result and investigate the effect of disposal efficiency, we consider
\[
\alpha\in\{0.25,0.5,0.75,1\}.
\]
For each value of $\alpha$, we set $\beta=1$, $a=0.01$, and $M=1$, and simulate the process over the time interval $[0,1000]$.

Figure~\ref{fig:twoagents} displays the garbage trajectories $x_1(t)$ and $x_2(t)$ for the four values of $\alpha$.

By Corollary~\ref{Corollary:Two-agent-garbage-concentration}, for every $\alpha\in(0,1]$, there exists a random index $I\in\{1,2\}$ such that
\[
x_I(t)\longrightarrow 0,
\qquad
x_{3-I}(t)\longrightarrow M
\qquad\text{as }t\to\infty\quad\text{a.s.}
\]
Thus, all the garbage asymptotically concentrates on one fixed agent.

The simulations illustrate the influence of $\alpha$ on the transient behavior of the process. When $\alpha<1$, each interaction transfers only a fraction of the winner's garbage, and the garbage loads may fluctuate before approaching a concentrated configuration. By contrast, when $\alpha=1$, the winner transfers its entire garbage load to the loser at each interaction. Consequently, the garbage distribution becomes concentrated on one agent after the first interaction and remains concentrated thereafter.

These experiments illustrate the distinction between concentration in terms of garbage order statistics and convergence of individual garbage loads. In the two-agent model, the latter property holds for every $\alpha\in(0,1]$, although the disposal efficiency influences the transient dynamics.

\begin{figure}[H]
\centering

\begin{minipage}{0.49\textwidth}
\centering
\includegraphics[width=\linewidth]{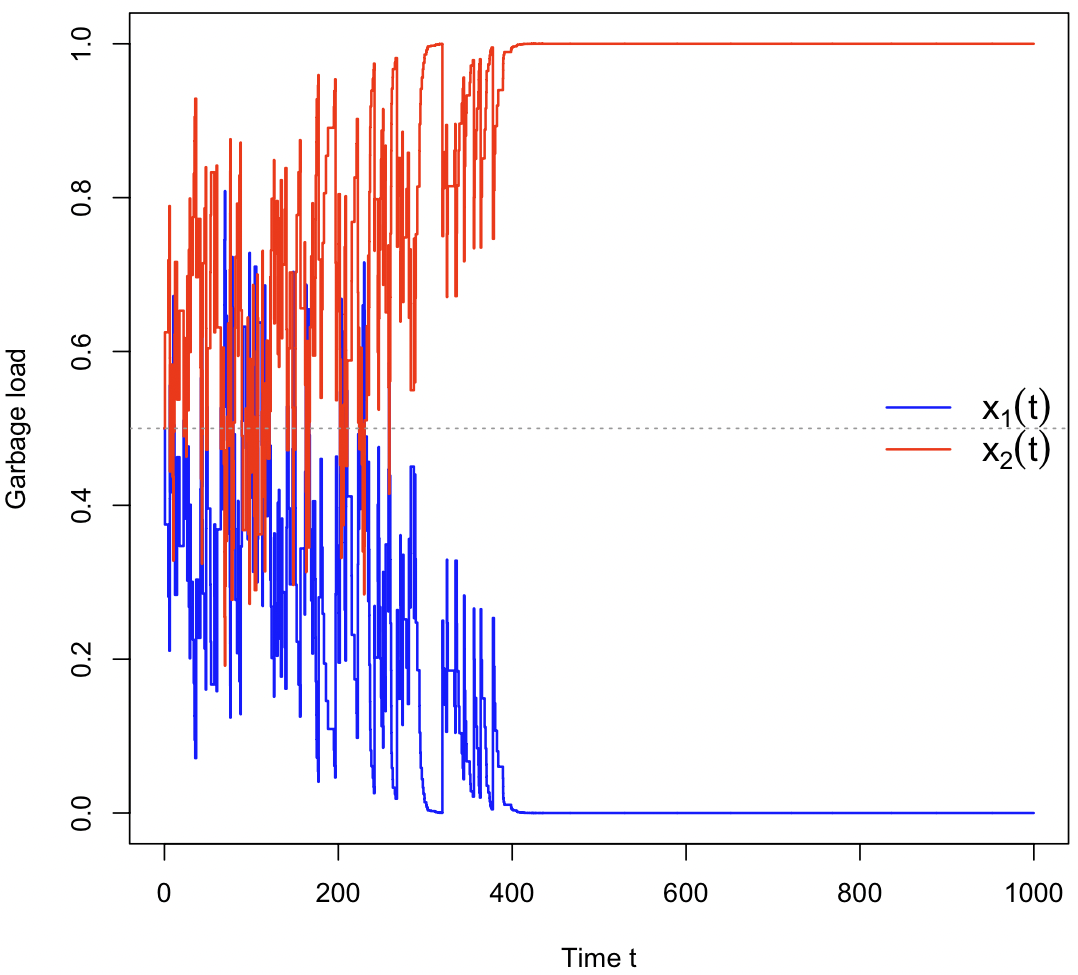}
\subcaption{$\alpha=0.25$}
\end{minipage}
\hfill
\begin{minipage}{0.49\textwidth}
\centering
\includegraphics[width=\linewidth]{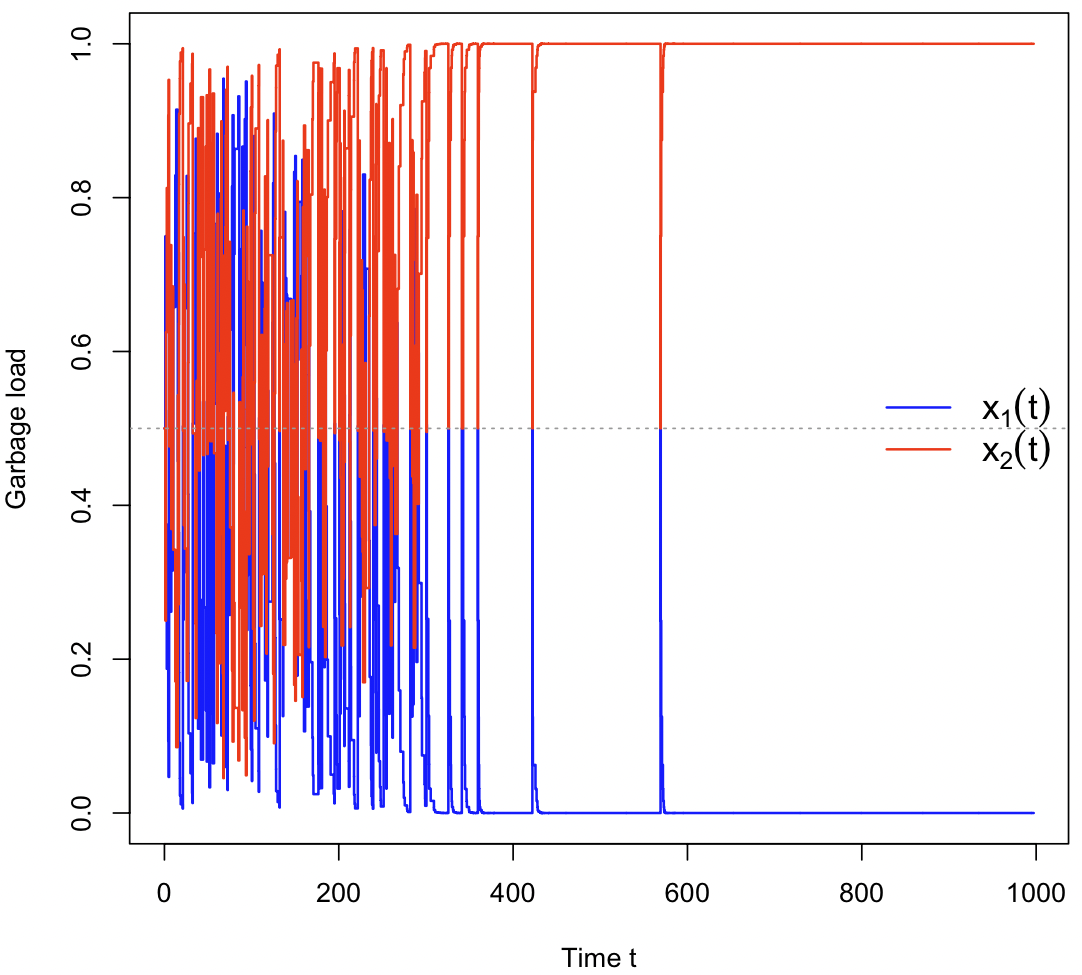}
\subcaption{$\alpha=0.5$}
\end{minipage}

\vspace{0.3cm}

\begin{minipage}{0.49\textwidth}
\centering
\includegraphics[width=\linewidth]{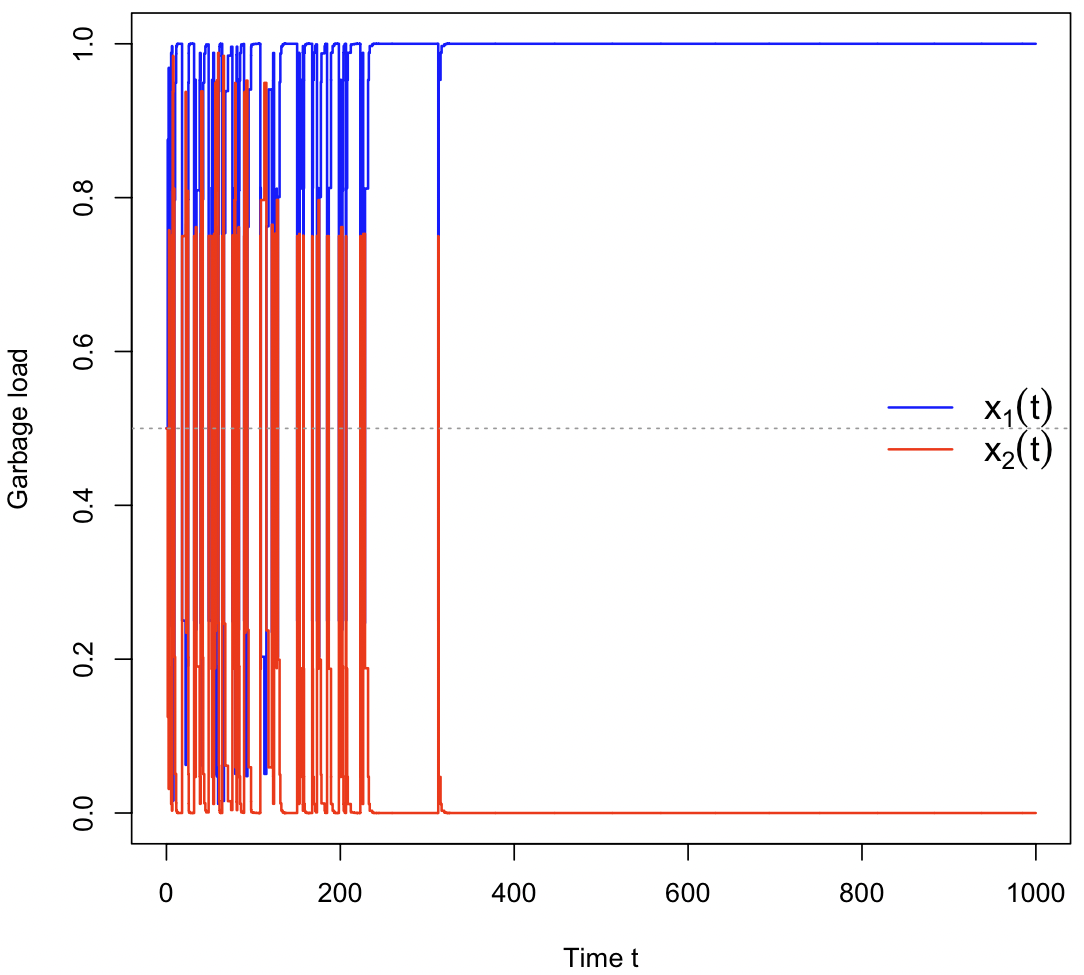}
\subcaption{$\alpha=0.75$}
\end{minipage}
\hfill
\begin{minipage}{0.49\textwidth}
\centering
\includegraphics[width=\linewidth]{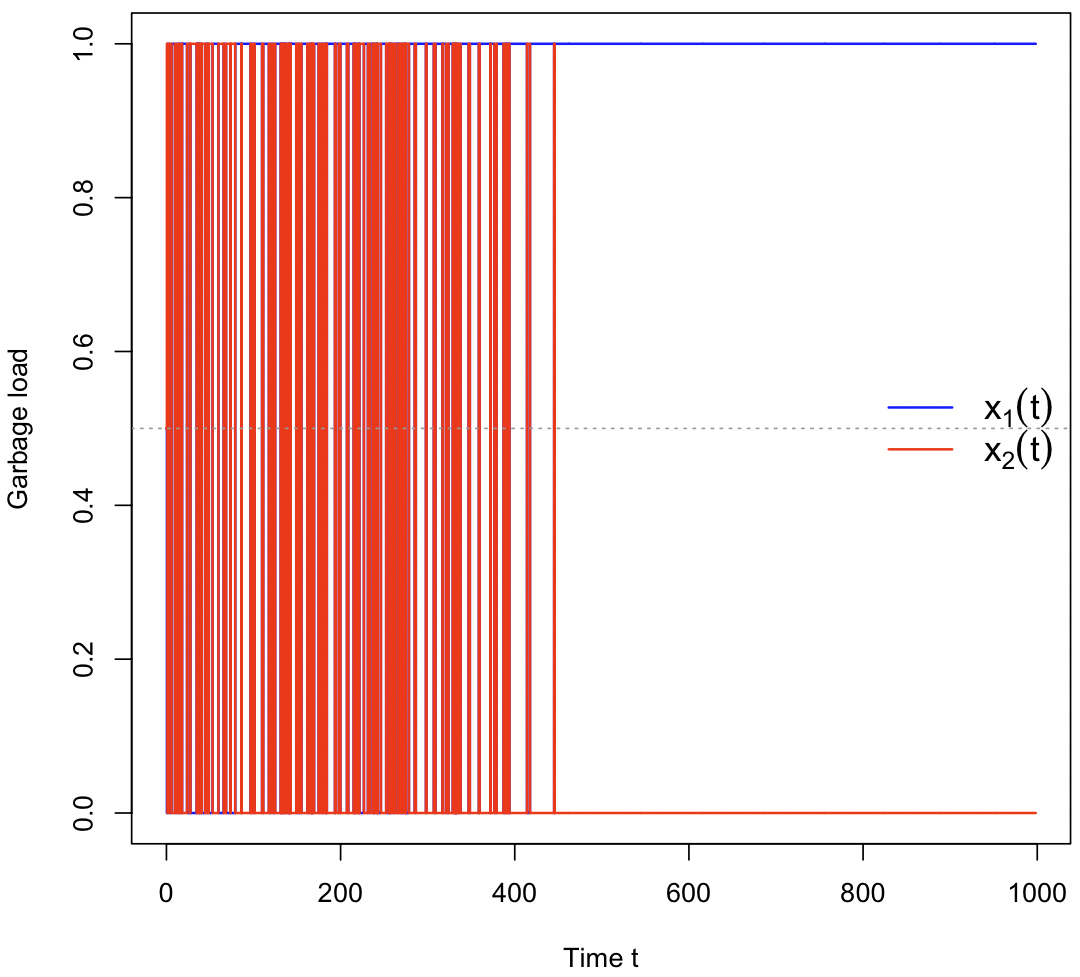}
\subcaption{$\alpha=1$}
\end{minipage}

\caption{Garbage trajectories in the two-agent model
with $\beta=1$, $a=0.01$, and $M=1$.
The blue and red curves represent $x_1(t)$ and
$x_2(t)$, respectively. The horizontal dotted line
indicates $M/2$.}
\label{fig:twoagents}
\end{figure}

\section{Discussion and Future Research}

In this paper, we investigated a competitive garbage-offloading
process on a finite graph, in which agents transfer garbage
according to the outcomes of pairwise interactions governed
by the Fermi function. The model incorporates both payoff
dynamics and garbage redistribution, allowing us to examine
the long-term behavior of these two interconnected processes.

Our results show that competitive interactions lead to
unbounded payoff inequality. In particular, the payoff
dispersion grows at least linearly in time in the asymptotic
sense, while the payoff range grows at least on the order
of $\sqrt{t}$. These conclusions hold for arbitrary finite
graphs with at least one edge and are independent of the
disposal efficiency. The numerical simulations illustrate
the theoretical lower bounds and the polarization of
payoffs over time.

For garbage redistribution, we established that when
$\alpha=1$ and the graph is connected, all garbage becomes
concentrated on a single agent after a finite random time,
almost surely. However, the identity of the agent holding
all the garbage may change over time. For the two-agent
model, we obtained a stronger conclusion: for every
$\alpha\in(0,1]$, all garbage asymptotically concentrates
on one fixed agent, almost surely. These results demonstrate
that garbage concentration can arise under competitive
interactions, although its precise form depends on the
disposal efficiency and the graph structure.

Several questions remain open. One natural direction is
to investigate garbage concentration for
$\alpha\in(0,1)$ on general connected graphs. In
particular, it would be interesting to determine whether
the maximum garbage load converges to the total garbage
and whether the garbage loads of individual agents
converge. Another direction is to study the influence
of graph topology on the rate of garbage concentration,
especially for paths, stars, and complete graphs.

Further research could also examine the quantitative
behavior of payoff dispersion and payoff polarization.
Although the present results provide asymptotic lower
bounds, obtaining sharper growth estimates or identifying
limiting distributions would offer a more complete
description of the payoff dynamics. Finally, extending
the model to heterogeneous interaction rates or
time-dependent networks may provide additional insight
into the relationship between competitive interactions,
payoff inequality, and garbage redistribution.

\section{Statements and Declarations}

\subsection{Competing Interests}
The author is partially funded by an NSTC grant.

\subsection{Data Availability}
No associated data were used in this study.

\end{document}